\def\NZQ{\mathbb}               
\def\NN{{\NZQ N}}
\def\QQ{{\NZQ Q}}
\def\ZZ{{\NZQ Z}}
\newtheorem{Theorem}{Theorem}[section]
\newtheorem{Lemma}[Theorem]{Lemma}
\newtheorem{Proposition}[Theorem]{Proposition}
\newtheorem{Remark}[Theorem]{Remark}
\newtheorem{Question}[Theorem]{Question}
\let\epsilon\varepsilon
\let\phi=\varphi
\let\kappa=\varkappa
\def \a {\alpha}
\begin{document}
\title[Essentially generation and invariants]{Essential finite generation of valuation rings in characteristic zero algebraic 
function fields}
\author{Steven Dale Cutkosky}
\thanks{Steven Dale Cutkosky was partially supported by NSF grant DMS-1700046.}
\begin{abstract}
Let $K$ be a characteristic zero algebraic function field with a valuation $\nu$. Let $L$ be  a finite extension of $K$ and $\omega$ be an extension of $\nu$ to $L$. We establish that the  valuation ring $V_{\omega}$ of $\omega$ is essentially finitely generated over the valuation ring $V_{\nu}$ of $\nu$ if  and only if the initial index $\epsilon(\omega|\nu)$ is equal to the ramification index $e(\omega|\nu)$ of the extension. This gives a positive answer, for characteristic zero algebraic function fields, to a question posed by Hagen Knaf. 
\end{abstract}


\maketitle

\section{Introduction}\label{SecInt} Suppose that $K$ is a  field and $\nu$ is a valuation of $K$. Let  $V_{\nu}$ be the valuation ring of $\nu$ with maximal ideal $m_{\nu}$  and $\Gamma_{\nu}$ be the value group of $\nu$. Suppose that $K\rightarrow L$ is a finite field extension and $\omega$ is an extension of $\nu$ to $L$. We have associated ramification and inertia indices of the extension $\omega$ over $\nu$
$$
e(\omega|\nu)=[\Gamma_{\omega}:\Gamma_{\nu}]\mbox{ and }
f(\omega|\nu)=[V_{\omega}/m_{\omega}:V_{\nu}/m_{\nu}].
$$
The defect of the extension of $\omega$ over $\nu$ is 
$$
d(\omega|\nu)=\frac{[L^h:K^h]}{e(\omega|\nu)f(\omega|\nu)}
$$
where $K^h$ and $L^h$ are  henselizations of the valued fields $K$ and $L$.
This is a positive integer (as shown in \cite{EP}) which is 1 if $V_{\nu}/m_{\nu}$ has characteristic zero and is a power of $p$ if $V_{\nu}/m_{\nu}$ has positive characteristic $p$.

Let $H$ be an ordered subgroup of an ordered abelian group $G$. The initial index $\epsilon(G|H)$ of $H$ in $G$ is defined (\cite[page 138]{End}) as 
$$
\epsilon(G|H)=|\{g\in G_{\ge 0}\mid g<H_{>0}\}|,
$$
where 
$$
G_{\ge 0}=\{g\in G\mid g\ge 0\}\mbox{ and }H_{>0}=\{h\in H\mid h>0\}.
$$
We define the initial index $\epsilon(\omega|\nu)$ of the extension as $\epsilon(\Gamma_{\omega}:\Gamma_{\nu})$. 

We always have that $\epsilon(\omega|\nu)\le e(\omega|\nu)$ (\cite[(18.3)]{End}).

If $S$ is a subsemigroup of an abelian semigroup $T$, we say that $T$ is a finitely generated $S$-module if there exists a finite number of elements $g_1,\ldots,g_t\in T$ such that 
$$
T=\cup_{i-1}^t(g_i+S).
$$
It is shown in \cite[Proposition 3.3]{CN}  that $\epsilon(\omega|\nu)=e(\omega|\nu)$ if and only if $(\Gamma_{\omega})_{\ge 0}$ is a finitely generated $(\Gamma_{\nu})_{\ge 0}$-module. We remark that  $(\Gamma_{\nu})_{\ge 0}$ is the semigroup of values of elements of the valuation ring $V_{\nu}$.
 
Let $D(\nu,L)$ be the integral closure of $V_{\nu}$ in $L$. The localizations of $D(\nu,L)$ at its maximal ideals are the valuation rings $V_{\omega_i}$ of the extensions $\omega_i$ of $\nu$ to $L$. We have the following remarkable theorem.

\begin{Theorem}(\cite[Theorem 18.6]{End}\label{ThmEnd}) The ring $D(\nu,L)$ is a finite $V_{\nu}$-module if and only if 
$$
d(\omega_i|\nu)=1\mbox{ and }\epsilon(\omega_i|\nu)=e(\omega_i|\nu)
$$
for all extensions $\omega_i$ of $\nu$ to $L$.
\end{Theorem}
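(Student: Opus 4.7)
The plan is to reduce to the henselian case and then handle each implication using the explicit structure of henselian valued-field extensions.

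\textbf{Step 1 (henselian reduction).} The inclusion $V_{\nu}\hookrightarrow V_{\nu^h}$ is faithfully flat, and there is a canonical decomposition
\[
V_{\nu^h}\otimes_{V_{\nu}}D(\nu,L)\;\cong\;\prod_{i}V_{\omega_i^h},
\]
indexed by the extensions $\omega_i$ of $\nu$ to $L$. Since each of $e$, $f$, $d$, and $\epsilon$ is preserved by henselization, finite generation of $D(\nu,L)$ over $V_{\nu}$ is equivalent to the finite generation of every $V_{\omega_i^h}$ over $V_{\nu^h}$. Thus I may assume from the outset that $V_{\nu}$ is henselian, so that $\omega$ is the unique extension of $\nu$ to $L$, $D(\nu,L)=V_{\omega}$ is local, and the fundamental equality $[L:K]=e(\omega|\nu)\,f(\omega|\nu)\,d(\omega|\nu)$ holds.

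\textbf{Step 2 ($\Rightarrow$).} Suppose $V_{\omega}=V_{\nu}z_1+\cdots+V_{\nu}z_n$. Writing any $u\in V_{\omega}$ as $u=\sum c_jz_j$ and exploiting the ultrametric inequality, one deduces that $\Gamma_{\omega}/\Gamma_{\nu}$ is finite and that $(\Gamma_{\omega})_{\ge 0}$ is finitely generated as a $(\Gamma_{\nu})_{\ge 0}$-module, whence $\epsilon(\omega|\nu)=e(\omega|\nu)$ by Proposition~3.3 of \cite{CN}. A parallel residue-field argument bounds $f(\omega|\nu)$ by $n$. Combining these with a rank comparison forces $[L:K]\le ef$, which together with the fundamental equality yields $d(\omega|\nu)=1$.

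\textbf{Step 3 ($\Leftarrow$).} Assume $\epsilon(\omega|\nu)=e(\omega|\nu)=e$ and $d(\omega|\nu)=1$. Choose $\tau_1,\ldots,\tau_e\in V_{\omega}$ whose values realize the $e$ minimal nonnegative coset representatives of $\Gamma_{\nu}$ in $\Gamma_{\omega}$ (available precisely because $\epsilon=e$), and lift a $V_{\nu}/m_{\nu}$-basis of $V_{\omega}/m_{\omega}$ to $y_1,\ldots,y_f\in V_{\omega}$. I claim the $ef$ products $y_j\tau_i$ generate $V_{\omega}$ over $V_{\nu}$. Given $u\in V_{\omega}$, locate the unique $i$ with $\omega(u)\in\omega(\tau_i)+\Gamma_{\nu}$; minimality ensures $u/\tau_i\in V_{\omega}$, and expanding its residue in the $\bar y_j$, peeling off the resulting $V_{\nu}$-combination of $y_j\tau_i$, and iterating yields remainders of strictly increasing value. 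The partial sums thus converge in $\widehat V_{\omega}$, and the hypothesis $d(\omega|\nu)=1$ supplies the isomorphism $V_{\omega}\otimes_{V_{\nu}}\widehat V_{\nu}\xrightarrow{\sim}\widehat V_{\omega}$; faithfully flat descent across this isomorphism transfers the generation statement back to $V_{\omega}$.

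\textbf{Main obstacle.} The delicate point is the reverse direction: the successive-approximation process of Step~3 does not terminate inside $V_{\omega}$, so one is forced to pass to the completion and descend. The characterization of defectlessness by the isomorphism $V_{\omega}\otimes_{V_{\nu}}\widehat V_{\nu}\cong\widehat V_{\omega}$, supplied by Endler's framework, is the essential ingredient, and this is precisely where the hypothesis $d(\omega|\nu)=1$ does the decisive work.
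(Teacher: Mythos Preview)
The paper does not give its own proof of this statement; it is quoted verbatim from Endler \cite[Theorem 18.6]{End} (with an equivalent formulation attributed to \cite{NB}), so there is no in-paper argument to compare against. I will therefore assess your sketch on its own merits.

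Step~3 contains a genuine gap. Your successive-approximation produces remainders of \emph{strictly increasing} $\omega$-value, but in a general value group a strictly increasing sequence need not be cofinal: if $\Gamma_{\omega}$ contains a copy of $\QQ$ (or any non-discrete rank-one piece), the values can increase forever while staying below a fixed element, so the remainders do not tend to~$0$ in the valuation topology and the partial sums do not converge in $\widehat V_{\omega}$. Even setting this aside, the isomorphism $V_{\omega}\otimes_{V_{\nu}}\widehat V_{\nu}\cong \widehat V_{\omega}$ that you invoke is essentially the statement that $V_{\omega}$ is a finite $V_{\nu}$-module (completion commutes with finite modules), which is exactly what you are trying to prove; defect is defined via the henselization, not the completion, and there is no general result identifying $d(\omega|\nu)=1$ with such a base-change isomorphism for arbitrary-rank valuations. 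So the descent step is circular as written.

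Step~1 also needs care: the decomposition $V_{\nu^h}\otimes_{V_{\nu}}D(\nu,L)\cong\prod_i V_{\omega_i^h}$ relies on $K^h\otimes_K L$ being a product of fields, which requires $L/K$ separable; Endler's theorem has no separability hypothesis. Endler's actual route (see (18.5)--(18.6) in \cite{End}) computes $\dim_{V_{\nu}/m_{\nu}}\bigl(V_{\omega}/m_{\nu}V_{\omega}\bigr)=\epsilon(\omega|\nu)f(\omega|\nu)$ directly and combines this with the freeness of finitely generated torsion-free modules over valuation rings, avoiding completion entirely.
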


An equivalent formulation is given in \cite[Th\'eor\`eme 2,  page 143]{NB}. 

Suppose that $A$ is a subring of a ring $B$. We will say that $B$ is essentially finitely generated over $A$ (or that $B$ is essentially of finite type over $A$) if $B$ is  a localization of a finitely generated $A$-algebra. 

Hagen Knaf proposed the following interesting question, asking for  a local form of the above theorem.

\begin{Question}(Knaf) Suppose that   $\omega$ is an extension of $\nu$ to $L$. Is $V_{\omega}$ essentially finitely generated over $V_{\nu}$ if and only if 
$$
d(\omega|\nu)=1\mbox{ and }\epsilon(\omega|\nu)=e(\omega|\nu)?
$$
\end{Question}

Knaf proved the implies direction of his question; his proof is reproduced in \cite[Theorem 4.1]{CN}.

If $e(\omega|\nu)=1$, $d(\omega|\nu)=1$ and $V_{\omega}/m_{\omega}$ is separable over $V_{\nu}/m_{\nu}$,
then the only if direction of the question is true, as is proven in \cite{Hens}. Also, the only if direction of the question is true if $L/K$ is normal or $\omega$ is the unique extension of $\nu$ to $L$ by \cite[Corollary 2.2]{CN}.

The only if direction of the question is proven when $K$ is the quotient field of an excellent two-dimensional excellent local domain and $\nu$ dominates $R$ in \cite[Theorem 1.4]{CN}. The only if direction is proven when $K$ is an algebraic function field over a field $k$, $\nu$ is an Abhyankar valuation of $K$ and 
$V_{\omega}/m_{\omega}$ is separable over $k$ in \cite[Theorem 1.5]{CN}.  

The proof of \cite[Theorem 1.4]{CN} uses the existence of a resolution of excellent surface singularities (\cite{Lip} or \cite{CJS}) and local monomialization of defectless extensions of two dimensional excellent local domains (\cite[Theorem 3.7]{Ramif} and \cite[Theorem 7.3]{CP}). The proof of \cite[Theorem 1.5]{CN} uses the local uniformization theorem for Abhyankar valuations in algebraic function fields of Knaf and Kuhlmann in \cite{KK}.

In this paper, we give a positive answer to the question for characteristic zero algebraic function fields, as stated in  the following theorem. 

\begin{Theorem}\label{TheoremA} Let $K$ be an algebraic function field over a field $k$ of characteristic zero and let $\nu$ be a valuation of $K/k$ ($\nu$ is trivial on $k$). Assume that $L$ is a finite extension of $K$ and $\omega$ is an extension of $\nu$ to $L$. Then $V_{\omega}$ is essentially finitely generated over $V_{\nu}$
if and only if $e(\omega|\nu)=\epsilon(\omega|\nu)$.
\end{Theorem}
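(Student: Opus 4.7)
The plan is to establish the converse direction (assuming $\epsilon(\omega|\nu) = e(\omega|\nu)$ implies $V_\omega$ is essentially of finite type over $V_\nu$), since the other direction is the content of Knaf's implication reproduced in \cite[Theorem 4.1]{CN}. The characteristic zero hypothesis on $k$ forces $V_\nu/m_\nu$ to contain $k$, hence to have characteristic zero, so $d(\omega|\nu)=1$ holds automatically; this matches the defect hypothesis of Knaf's original question and of Theorem \ref{ThmEnd}.

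First I would construct a good birational model. Choose algebraic regular local rings $R \subset V_\nu$ and $S \subset V_\omega$, essentially of finite type over $k$, with quotient fields $K$ and $L$ respectively and with $S$ lying over $R$. Then I would invoke a local monomialization theorem for characteristic zero algebraic function fields, in the spirit of the earlier results of \cite{Ramif}, along the extension $\omega|\nu$: after replacing $R$ and $S$ by suitable birational regular local models dominated by the valuations, there exist regular systems of parameters $x_1,\dots,x_n$ of $R$ and $y_1,\dots,y_n$ of $S$ together with a full-rank integer matrix $M=(m_{ij})$ such that $x_i = u_i \prod_j y_j^{m_{ij}}$ for units $u_i \in S$. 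In this monomial situation the torsion of $\Gamma_\omega/\Gamma_\nu$ is encoded by the cokernel of $M$, and the residue extension $S/m_S$ over $R/m_R$ is a finite separable field extension, since we are in characteristic zero.

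By \cite[Proposition 3.3]{CN}, the hypothesis $\epsilon(\omega|\nu)=e(\omega|\nu)$ is equivalent to $(\Gamma_\omega)_{\geq 0}$ being finitely generated as a $(\Gamma_\nu)_{\geq 0}$-module; in particular $\Gamma_\omega/\Gamma_\nu$ is finite. I would then pick elements $t_1,\dots,t_r \in V_\omega$ whose $\omega$-values realize this module generation, together with a lift $\theta \in V_\omega$ of a primitive element of the residue field extension. Form the $V_\nu$-subalgebra $B := V_\nu[\theta, t_1,\dots,t_r]$ of $V_\omega$ and the prime $P := m_\omega \cap B$; the claim to prove is $V_\omega = B_P$, which exhibits $V_\omega$ as essentially finitely generated over $V_\nu$.

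The main obstacle is this final identification. Equivalently, given $g \in V_\omega$, one must produce $a,b \in B$ with $b \notin P$ and $g = a/b$. My strategy is a successive-approximation argument: using the monomial relations $x_i = u_i\prod y_j^{m_{ij}}$ one can, by the choice of the $t_i$ and an element of $R$, produce an element of $B$ sharing the $\omega$-value of $g$; separability of the residue extension in characteristic zero lets one further subtract a polynomial in $\theta$ with $V_\nu$-coefficients so that the residues also match, strictly increasing the value of the remainder. The delicate point, and the place where I expect the real technical work, is showing this procedure terminates in finitely many steps so that $V_\omega$ is captured as a localization rather than as a directed limit of localizations; this should follow by combining finiteness of $\Gamma_\omega/\Gamma_\nu$ (from $\epsilon = e$) with the explicit book-keeping afforded by the monomialized model and with the fact that a characteristic zero separable residue extension is generated by $\theta$ over $V_\nu/m_\nu$.
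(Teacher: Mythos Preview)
Your proposal correctly isolates the direction requiring work and the automatic vanishing of defect, and starting from a locally monomial model is the right first move. However, the successive-approximation strategy has a genuine gap at exactly the step you flag as delicate: there is no mechanism forcing termination. What you describe produces, for each $g\in V_\omega$, a sequence in $B_P$ converging to $g$ in the $\omega$-adic topology; this places $g$ in a completion or Henselization of $B_P$, not in $B_P$ itself. Finiteness of $\Gamma_\omega/\Gamma_\nu$ does not rescue this: the analogous approximation \emph{does} work after passing to the Henselization (which is essentially how Theorem~\ref{ThmEnd} goes), but the whole difficulty here is that $\omega$ need not be the unique extension of $\nu$ to $L$, so one cannot reduce to that case.

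The paper avoids approximation altogether. Rather than fixing $B$ and trying to absorb each $f$, it fixes generators $z_1,\dots,z_m$ with $S_0=R_0[z_1,\dots,z_m]_\omega$ and then, for each $f\in V_\omega$, performs a \emph{finite} sequence of Perron transforms $R_0\to R_v$, $S_0\to S_v$ along $\nu$ and $\omega$ so that $f\in S_v$ while $S_v=R_v[z_1,\dots,z_m]_\omega$ is preserved. The technical ingredients you are missing are: (i) a much sharper normal form than a general monomial matrix, namely $x_{1,1}=\gamma y_{1,1}^e$ and $x_{i,j}=y_{i,j}$ otherwise (Proposition~\ref{PropGoodForm}); this is where $\epsilon=e$ enters, via Proposition~\ref{PropGS}; (ii) embedded resolution along $\nu$ by GMTSs (Theorem~\ref{Theorem1}), applied not to $f$ directly but to an element $c\in R_0$ divisible by $gh$ in $S_0$ (Lemma~\ref{Lemma2}), turning $g$ and $h$ into monomials times units in $S_m$; (iii) a lifting lemma (Proposition~\ref{PropSR}) showing each GMTS on $R$ induces one on $S$ that preserves both the diagonal form and the fixed generators $z_i$; and (iv) a monomial divisibility step (Proposition~\ref{PropZa}) to conclude $h\mid g$ in $S_v$. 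The generators thus come from the geometry of $S_0$ over $R_0$, not from abstract value-group and residue-field representatives.
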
 

Recall that the defect $d(\omega|\nu)$ must be 1 under an extension of equicharacteristic zero valuation rings, as occurs in Theorem \ref{TheoremA}.

The proof of Theorem \ref{TheoremA} uses an  explicit form of embedded local resolution of singularities  along a valuation in characteristic zero algebraic function fields, by Zariski \cite{Z1} for  rank 1 valuations and as extended to higher rank valuations by ElHitti in \cite{E1}. It also uses the existence of a local monomialization of regular algebraic regular local rings $R\rightarrow S$ of $K$ and $L$ respectively which are dominated by $\omega$ as  shown in \cite{Ast}. It is shown in the proof of Theorem \ref{TheoremA}, that if $e(\omega|\nu)=\epsilon(\omega|\nu)$, then there exists a locally monomial  extension $R\rightarrow S$  along $\omega$ such that if $S$ is a localization of a finitely generated $R$-algebra $F[z_1,\ldots,z_n]$, then $V_{\omega}$ is a localization of the finitely generated $V_{\nu}$-algebra $V_{\nu}[z_1,\ldots,z_r]$. 

It is shown in \cite{C2} that local monomialization is false in positive characteristic, even in dimension two. However, local monomialization is true for defectless extensions in dimension two (\cite[Theorem 3.7]{Ramif} and \cite[Theorem 7.3]{CP}).

\section{Preliminaries and Notation}\label{SecPrem}
We will denote the non-negative integers by $\NN$ and $\ZZ_{>0}$ will denote the positive integers. We will denote the maximal ideal of a local ring $R$ by $m_R$.
If $R$ and $S$ are local rings such that $R$ is a subring of $S$ and $m_S\cap R=m_R$ then we say that $S$ dominates $R$. If $A$ is a domain then ${\rm QF}(A)$ will denote the quotient field of $A$.

Suppose that $A$ is a subring of a ring $B$. We will say that $B$ is essentially finitely generated over $A$ (or that $B$ is essentially of finite type over $A$) if $B$ is  a localization of a finitely generated $A$-algebra. 

We refer to \cite{ZS2} and \cite{End} for basic facts about valuations.

Suppose that $k$ is a field  and $K/k$ is an algebraic function field over $k$. 
An algebraic local ring of $K$ is a local domain which is essentially of finite type over $k$ and whose quotient field is $K$. A birational extension $R\rightarrow R_1$ of an algebraic local ring $R$ of $K$ is an algebraic local ring $R_1$ of $K$ such that $R_1$ dominates $R$.

Suppose that $\nu$ is a valuation of $K/k$ (a valuation of $K$ which is trivial on $k$). Let $V_{\nu}$ be the valuation ring of $\nu$, with maximal ideal $m_{\nu}$. If $A$ is a subring of $V_{\nu}$. then we write $A_{\nu}=A_{m_{\nu}\cap A}$. If $A$ is a local ring which is a subring of $V_{\nu}$ and $m_{\nu}\cap A=m_A$ then we say that $\nu$ dominates $A$. 

Let $u=\mbox{rank }\nu$ and let 
$$
0= P_{\nu,u+1}\subset \cdots\subset P_{\nu,1}=m_{\nu}
$$
be the chain of prime ideals in $V_{\nu}$. Let $\Gamma_{\nu}$ be the valuation group of $\nu$ with chain of convex subgroups
$$
0=\Gamma_{\nu,0}\subset \Gamma_{\nu,1}\subset\cdots\subset \Gamma_{\nu,u}=\Gamma_{\nu}.
$$
Let $s_i$ be the rational rank of $\Gamma_{\nu,{i}}/\Gamma_{\nu,i-1}$ for $1\le i\le u$. For $1\le i\le u$, let $\nu_i$ be the valuation ring $V_{P_{\nu,i}}$ obtained by specialization of $\nu$. In particular, $\nu_1=\nu$. The value group of $\nu_i$ is $\Gamma_{\nu}/\Gamma_{\nu,i-1}$.

Suppose that $T$ is an algebraic local ring of $K$ which is dominated by $\nu$. Define prime ideals
$P_{T,i}=P_{\nu,i}\cap T$ in $T$ for $1\le i\le u$.

The technical condition (A) is defined in 
\cite[Definition 4.1]{E1}.

\begin{Lemma}\label{Lemma1} Suppose that $\mbox{char}(k)=0$. Then there exists an algebraic regular local ring $T$ of $K$ which is dominated by $\nu$ and such that
\begin{enumerate}
\item[1)] $\mbox{trdeg}_{ {\rm QF}(T/P_{T,i})}{\rm QF}(V/P_{\nu,i}) =0$ for $1\le i\le u$.
\item[2)] $T_{P_{T,i}}$ satisfies condition (A) for $1\le i\le u$.
\end{enumerate}
Further, if $T\rightarrow T_1$ is a birational extension along $\nu$ then $T_1$ satisfies 1) and 2).
\end{Lemma}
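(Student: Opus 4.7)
The strategy is to combine a ``residue field enlargement'' argument for condition 1) with the local embedded resolution theorem of Zariski (\cite{Z1}, rank one) extended to arbitrary rank by ElHitti (\cite{E1}) for condition 2), and then to observe that both properties are stable under further birational extensions along $\nu$.

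For condition 1): by Abhyankar's inequality each residue field ${\rm QF}(V_{\nu}/P_{\nu,i})$ has finite transcendence degree over $k$. I would choose finitely many elements $t_1,\ldots,t_m\in V_{\nu}$ whose images modulo $P_{\nu,i}$ contain a transcendence basis of ${\rm QF}(V_{\nu}/P_{\nu,i})$ over $k$ for every $i$ (one finite collection can serve all $i$ simultaneously because the total transcendence degree over $k$ is bounded by $\mbox{trdeg}_k K$). Starting from any algebraic regular local ring $R_0$ of $K$ dominated by $\nu$, adjoin the $t_j$, localize at the center of $\nu$, and if necessary perform a further birational extension along $\nu$ to restore regularity. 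The resulting regular local ring $R$ satisfies 1), since $R/P_{R,i}$ then contains a transcendence basis of ${\rm QF}(V_{\nu}/P_{\nu,i})$ over $k$. An application of ElHitti's local embedded resolution then produces a further birational extension $R\to T$ with $T$ regular and $T_{P_{T,i}}$ satisfying condition (A) for each $i$.

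Condition 1) persists under any further enlargement: if $S\supseteq R$ is an algebraic local ring dominated by $\nu$, then $P_{S,i}\cap R=P_{R,i}$ gives $R/P_{R,i}\hookrightarrow S/P_{S,i}\subseteq V_{\nu}/P_{\nu,i}$, which keeps ${\rm QF}(V_{\nu}/P_{\nu,i})$ algebraic over the larger residue field ${\rm QF}(S/P_{S,i})$. Stability of condition 2) under birational extensions along $\nu$ is part of the formulation of condition (A) in \cite[Definition 4.1]{E1}, which is designed to persist through sequences of admissible monoidal transforms at centers regular in each step. The main obstacle is the existence step in 2) --- producing a single $T$ at which condition (A) holds at \emph{every} specialization $P_{T,i}$ simultaneously. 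This is the substantive content of ElHitti's induction on the rank $u$, whose base case $u=1$ is Zariski's rank-one local uniformization in characteristic zero, and which I would import as a black box.
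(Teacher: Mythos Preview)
Your outline for condition 1) and its stability is essentially what the paper does: adjoin a finite transcendence basis for each residue layer, localize at the center of $\nu$, and observe that algebraicity of ${\rm QF}(V_\nu/P_{\nu,i})$ over ${\rm QF}(T/P_{T,i})$ can only improve under further birational extension along $\nu$.

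The gap is in your treatment of condition 2). You describe condition (A) as something \emph{produced} by ElHitti's embedded resolution and whose stability is ``part of the formulation'' of the definition. This inverts the logical role of condition (A): in \cite{E1} it is a standing \emph{hypothesis} that makes the Perron-transform machinery function, not a conclusion of that machinery. The result that actually furnishes condition (A), and does so in the strong stable form needed here, is \cite[Theorem~6.3]{CG}: for each $i$ it gives a birational extension $B_{P_{B,i}}\to C_i$ along $\nu_i$ such that condition (A) holds for \emph{every} normal algebraic local ring of $K$ dominating $C_i$ and dominated by $\nu_i$. Stability of 2) under further birational extension along $\nu$ is then immediate from this ``threshold'' statement, not from the definition of (A) itself.

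The paper also handles the simultaneity issue you flag explicitly: having obtained the $C_i$ separately (one for each level $i$), it chooses a single regular algebraic local ring $T$ of $K$ dominated by $\nu$, dominating $B$, and with $T_{P_{T,i}}$ dominating $C_i$ for all $i$. Such a $T$ exists because each $C_i$ is essentially of finite type over $k$, so one can adjoin the finitely many generators needed for all $i$ at once, localize at the center of $\nu$, and pass to a regular model. Your proposal gestures at this but does not supply the mechanism; without the threshold form of \cite[Theorem~6.3]{CG} there is no evident reason why achieving (A) at one level would survive the transforms needed to achieve it at another.
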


\begin{proof} Let $A$ be an algebraic local ring of $K$ which is dominated by $\nu$. Let $z_{ij}\in V_{\nu}$ be such that $\{z_{ij}\}_j$ for $1\le i\le u$ is a transcendence basis of ${\rm QF}(V/P_{\nu,i})$ over 
${\rm QF}(A/P_{A,i})$. This is a finite set. Let $B=A[z_{ij}]_{\nu}$. Then $B$ satisfies 1) and if $B\rightarrow R$ is a birational extension along $\nu$ then $R$ satisfies 1).

By Theorem 6.3 \cite{CG}, for $1\le i\le u$, there exist birational extensions $B_{P_{B,i}}\rightarrow C_i$ such that $C_i$ is dominated by $\nu_i$ and condition (A) holds  for any  algebraic normal local ring of 
$K$ which dominates $C_i$ and is dominated by $\nu_i$. Let $T$ be any regular algebraic local ring of $K$ which is dominated by $\nu$ such that $T$ dominates $B$ 
 and such that $T_{P_{T,i}}$ dominates $C_i$ for all $i$. Then $T$ satisfies the conclusions of the lemma.
\end{proof}

Suppose that $T$ satisfies the conclusions of Lemma \ref{Lemma1}. Suppose that
\begin{equation}\label{eq1}
x_{1,1},\ldots,x_{1,s_1},x_{1,s_1+1},\ldots,x_{1,t_1},x_{2,1},\ldots,x_{2,s_2},x_{2,s_2+1},\ldots,x_{2,t_2},x_{3,1},\ldots, x_{u,t_u}
\end{equation}
are regular parameters in $T$. The regular parameters (\ref{eq1}) are called {\it good parameters} if
$x_{i,1},\ldots,x_{i,t_i}\in P_{T,i}\setminus P_{T,i+1}$ and $\nu(x_{i,1}),\ldots,\nu(x_{i,s_i})$ form a rational basis of $(\Gamma_{\nu,i}/\Gamma_{\nu,i-1})\otimes\QQ$ for $1\le i\le u$.  If $S$ is a subset of $\{1,\ldots,u\}$ then the regular parameters (\ref{eq1}) are called $S$-{\it good parameters} if they are good parameters and $P_{T,i}=(x_{i,1},\ldots,x_{i,t_i},\ldots)$ for $i\in S$. We will say that the parameters (\ref{eq1}) are {\it very good} if they are $\{1,2,\ldots,u\}$-good. We remark that good parameters are always $\{1\}$-good.

Suppose that (\ref{eq1}) are good parameters and
\begin{equation}\label{eq2}
\overline x_{1,1},\ldots,\overline x_{1,s_1},\overline x_{1,s_1+1},\ldots,\overline x_{1,\overline t_1},\overline x_{2,1},\ldots,\overline x_{2,s_2},\overline x_{2,s_2+1},\ldots,\overline x_{2,\overline t_2},\overline x_{3,1},\ldots, \overline x_{u,\overline t_u}
\end{equation}
is another system of parameters in $T$. It is not required that the numbers $t_i$ and $\overline t_i$ are the same. The system of regular parameters (\ref{eq2}) is called an $S$-{\it good change of parameters} if the parameters (\ref{eq2}) are $S$-good and $\overline x_{ij}=x_{ij}$ for $1\le i\le u$ and $1\le j\le s_i$.


\section{Perron Transforms}

\subsection{Perron transforms of types (1,m), (2,m)  and (3,m)}

The basic Perron transforms of types (1,1) and (2,1) are defined by Zariski in \cite{Z1} for rank 1 valuations. They are used in \cite{Ast} and \cite{C1} to prove local monomialization of morphisms. The Perron transforms of types (1,m), (2,m) and (3,m), for use in higher rank, are defined by ElHitti in \cite{E1}. The notation (1,m), (1,m,r) and (2,m) used in \cite{E1} is a little different from our notation. 

We use the notation of Section \ref{SecPrem} and assume that $k$ has characteristic zero. 

Suppose that $T$ is an algebraic local ring of $K$ which is dominated by $\nu$ and that $T$ satisfies the conclusions of Lemma \ref{Lemma1}.
Suppose that (\ref{eq1}) are $S$-good parameters in $T$ and $1\le m\le u$.
We   define a Perron Transform $T\rightarrow T_1$ of type $(1,m)$  along $\nu$.  We first define $N_j$ 
by
$$
x_{mj}=N_1^{a_{j1}}\cdots N_{s_m}^{a_{js_m}}\mbox{ for  $1\le j\le s_m$}
$$
where $a_{ij}\in \NN$ are defined by Perron's algorithm, as explained in Sections B I and B II of \cite{Z1}. We have that  $\mbox{Det}(a_{ij})=\pm 1$ and $\nu(N_j)>0$ for all $j$.

We define  $T_1=T[N_1,\ldots,N_{s_m}]_{\nu}$, which  is a regular local ring. 
We define regular parameters
$\{\overline x(1)_{ij}\}$ in $T_1$ by  
$$
\overline x(1)_{ij}=
\left\{\begin{array}{ll}
N_j&\mbox{ if $i=m$ and $1\le j\le s_m$}\\
x_{ij}&\mbox{ otherwise}
\end{array}\right.
$$
The regular parameters $\{\overline x(1)_{ij}\}$ are $S$-good parameters in $T_1$.

We  now define a Perron Transform $T\rightarrow T_1$ of type $(2,m)$  along $\nu$. This is a generalization of the Perron transform constructed in Section B III of \cite{Z1}. Let $r$ be such that   $s_m<r\le t_m$. We first define $N_j$    by
$$
x_{mj}=\left\{\begin{array}{ll}
N_1^{a_{j1}}\cdots N_{s_m}^{a_{js_m}}N_r^{a_{j,s_m+1}}&\mbox{ if $1\le j\le s_m$}\\
N_1^{a_{s_m+1,1}}\cdots N_{s_m}^{a_{s_m+1,s_m}}N_r^{a_{s_m+1,s_m+1}}&\mbox{ if $ j= r$}
\end{array}\right.
$$
where $a_{ij}\in \NN$, $\mbox{Det}(a_{ij})=\pm 1$ and $\nu(N_1),\ldots,\nu(N_{s_m})>0$ and $\nu_m(N_r)=0, \nu(N_r) \ge 0$.

$N_1,\ldots,N_{s_m},N_r$ satisfying the above conditions always exists, as follows from a small variation in Zariski's algorithm in \cite{Z1}. We construct the Perron transform of Zariski from $x_{m,1},\ldots,x_{m,s_m}$ and $x_{m,r}$ for $\nu_m$, as contructed in Section B III of \cite{Z1}. In this algorithm, the next to last step constructs $M_1,\ldots,M_{s_m},M_r$ and a $(s_m+1)\times(s_m+1)$-matrix $(b_{ij})$ such that
$$
x_{mj}=\left\{\begin{array}{ll}
M_1^{b_{j1}}\cdots M_{s_m}^{b_{js_m}}M_r^{b_{j,s_m+1}}&\mbox{ if  $1\le j\le s_m$}\\
M_1^{b_{s_m+1,1}}\cdots M_{s_m}^{b_{s_m+1,s_m}}M_r^{b_{s_m+1,s_m+1}}&\mbox{ if  $ j= r$}
\end{array}\right.
$$
where $b_{ij}\in \NN$, $\mbox{Det}(b_{ij})=\pm 1$, $\nu_m(M_1),\ldots,\nu_m(M_{s_m}),\nu_m(M_r)>0$ and $\nu_m(M_r)=\nu_m(M_1)>0$. We then have that
$$
\nu_m\left(\frac{M_r}{M_1}\right)=\nu_m\left(\frac{M_1}{M_r}\right)=0.
$$
If $\nu(\frac{M_r}{M_1})\ge 0$, define $N_1,\ldots,N_{s_m},N_r$ by
$$
M_i=\left\{\begin{array}{ll} N_i&\mbox{ if $i\ne r$}\\
N_rN_1&\mbox{ if $i=r$}
\end{array}\right.
$$
If $\nu(\frac{M_1}{M_r})> 0$, define $N_1,\ldots,N_{s_m},N_r$ by
$$
M_i=\left\{\begin{array}{ll} N_1N_r&\mbox{ if $i=1$}\\
M_i=N_i&\mbox{ if $i\ne 1$ and  $i\ne r$}\\
N_1&\mbox{ if $i=r$}
\end{array}\right.
$$

We define $T_1=T[N_1,\ldots,N_{s_m},N_r]_{\nu}$, which  is a regular local ring. Let $\mathfrak m = m_{\nu}\cap T[N_1,\ldots,N_{s_m},N_r]$. Choose  $y\in T_1$  such that
$y$ is the lift to $T_1$ of a generator of the maximal ideal of 
$$
\begin{array}{ll}
&T_1/(x_{1,1},\ldots,x_{1,t_1},\ldots,x_{m-1,t_{m-1}},N_1,\ldots,N_{s_m},x_{m,s_m+1},\ldots, x_{m,r-1},x_{m,r+1},\ldots )\\
\cong &(T/m_T)[N_r]_{\mathfrak m(T/m_T)[N_r]}.
\end{array}
$$
Then
$$
y,x_{1,1},\ldots,x_{1,t_1},\ldots,x_{m-1,t_{m-1}},N_1,\ldots,N_{s_m},x_{m,s_m+1},\ldots, x_{m,r-1},x_{m,r+1},\ldots
$$
is a system of regular parameters in $T_1$. There is a smallest natural number $\lambda$ such that $y\in P_{T_1,\lambda}\setminus P_{T_1,\lambda+1}$.  We define regular parameters $\{\overline x(1)_{ij}\}$ in $T_1$ by

$$
 \overline x(1)_{ij}=\left\{\begin{array}{ll}
 N_j&\mbox{ if $i=m$ and $1\le j\le s_m$}\\
 y&\mbox{ if  $i=m$ and $j=r$}\\
 x_{ij}&\mbox{ otherwise}
 \end{array}\right.
 $$
 if $\lambda= m$, and

 $$
 \overline x(1)_{ij}=\left\{\begin{array}{ll}
 N_j&\mbox{ if $i=m$ and $1\le j\le s_m$}\\
 y&\mbox{ if $i=\lambda$ and $j=t_{\lambda}+1$}\\
 x_{i,j-1}&\mbox{ if  $i=m$ and $j\ge r+1$}\\
 x_{ij}&\mbox{ otherwise}
 \end{array}\right.
 $$
 if $\lambda\ne m$. 
 
 If $i\in S$ and $i>m$, then 
$P_{T,i}T_1=(x_{i,1},\ldots,x_{i,2},\ldots)T_1$ is a regular prime of $T_1$ which has the same height as $P_{T_1,i}$ since $T_1$ satisfies 1) of Lemma \ref{Lemma1}. Since this prime ideal is contained in $P_{T_1,i}$ we have that $P_{T_1,i}=P_{T,i}T_1$.
Thus if $m+1\in S$, then $\lambda\le m$.

 We have that $\{\overline x(1)_{ij}\}$
 are $S'$-good parameters in $T_1$,  
where 
$$
S'=\{j\in S|j>m\}.
$$

We now define a Perron transformation of type (3,m). Suppose that $d_1,\ldots,d_{s_m}\in\NN$, $k>m$ and $1\le l\le t_k$. Let 
$$
N=\frac{x_{kl}}{x_{m,1}^{d_1}\cdots x_{m,s_m}^{d_{s_m}}}.
$$
Then $\nu_k(N)>0$. Let $T_1=T[N]_{\nu}$, which is a regular local ring. Let 
$$
\overline x(1)_{ij}=\left\{\begin{array}{ll}
N&\mbox{ if $i=k$ and $j=l$}\\
x_{ij}&\mbox{ otherwise.}
\end{array}\right.
$$
Then $\{\overline x(1)_{i,j}\}$ are $S$-good parameters in $T_1$.

We will find the following proposition useful.

\begin{Proposition}\label{PropZa} Suppose that $R$ is an algebraic regular local ring of $K$ which is dominated by $\nu$ and $\{x_{ij}\}$ are good parameters in $R$. Suppose that 
$$
M_1=x_{1,1}^{a_{1,1}}\cdots x_{1,s_1}^{a_{1,s_1}}x_{2,1}^{a_{2,1}}\cdots x_{2,s_2}^{a_{2,s_2}}x_{3,1}^{a_{3,1}}\cdots x_{u,s_u}^{a_{u,s_u}}
$$
and
$$
M_2=x_{1,1}^{b_{1,1}}\cdots x_{1,s_1}^{b_{1,s_1}}x_{2,1}^{b_{2,1}}\cdots x_{2,s_2}^{b_{2,s_2}}x_{3,1}^{b_{3,1}}\cdots x_{u,s_u}^{b_{u,s_u}}
$$
 are monomials such that $\nu(M_1)\le\nu(M_2)$. Then there exists a sequence of  Perron tranforms of types (1,m) and (3,m)  along $\nu$, 
 $$
R\rightarrow R_1\rightarrow \cdots\rightarrow R_s,
$$
such that $M_1$ divides $M_2$ in $R_s$. 

If $\nu(M_1)=\nu(M_2)$, then $M_1=M_2$ since the members of 
$$
\{\nu(x_{ij})|1\le i\le u, 1\le j\le s_i\}
$$
are rationally independent.
\end{Proposition}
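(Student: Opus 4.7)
The plan is as follows. The second assertion is immediate from rational independence: the values $\nu(x_{i,j})$ for $1 \le i \le u$, $1 \le j \le s_i$, are $\QQ$-linearly independent in $\Gamma_\nu \otimes \QQ$, since for each $i$ the classes modulo $\Gamma_{\nu,i-1}$ form a rational basis of $(\Gamma_{\nu,i}/\Gamma_{\nu,i-1}) \otimes \QQ$ and the convex subgroups $\Gamma_{\nu,0} \subset \cdots \subset \Gamma_{\nu,u}$ are strictly increasing. Reducing $\sum_{i,j}(b_{i,j}-a_{i,j})\nu(x_{i,j})=0$ modulo $\Gamma_{\nu,u-1}$ and descending, this forces $a_{i,j}=b_{i,j}$ for all $i,j$, hence $M_1=M_2$.

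For the main assertion, set $N = M_2/M_1 = \prod_{i,j} x_{i,j}^{c_{i,j}}$ with $c_{i,j} := b_{i,j} - a_{i,j} \in \ZZ$; it suffices to construct a sequence of Perron transforms of types $(1,m)$ and $(3,m)$ after which $N$ becomes a monomial in the new regular parameters of $R_s$ with non-negative exponents, for then $N \in R_s$ and $M_1 \mid M_2$ in $R_s$. Let $i_0$ be the largest $i$ with some $c_{i_0,j} \ne 0$; if none exists, $M_1 = M_2$ and there is nothing to prove. Modulo $\Gamma_{\nu,i_0-1}$, the contributions to $\nu(N)$ from levels $i < i_0$ lie in $\Gamma_{\nu,i_0-1}$ and those from $i > i_0$ are identically zero, so $\nu(N) \equiv \sum_j c_{i_0,j} \nu(x_{i_0,j}) \pmod{\Gamma_{\nu,i_0-1}}$. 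Since $\nu(N) \ge 0$ and the right-hand side is nonzero in $\Gamma_{\nu,i_0}/\Gamma_{\nu,i_0-1}$ by rational independence within level $i_0$, it is strictly positive in that quotient.

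Now apply a Perron transform of type $(1,i_0)$ via Zariski's algorithm \cite{Z1}, producing new level-$i_0$ regular parameters $N_1, \ldots, N_{s_{i_0}}$ with $\nu(N_k) > 0$ and $\prod_j x_{i_0,j}^{c_{i_0,j}} = \prod_k N_k^{d_k}$ with every $d_k \in \NN$; by value conservation $\sum_k d_k \nu(N_k) = \sum_j c_{i_0,j} \nu(x_{i_0,j}) > 0$, so at least one $d_l > 0$. In the resulting ring one has $N = \prod_k N_k^{d_k} \cdot \prod_{m < i_0,\, j \le s_m} x_{m,j}^{c_{m,j}}$. For each level $m < i_0$ with some $c_{m,j} < 0$, apply a Perron transform of type $(3,m)$ with $k = i_0$, target index $l$, and exponents $e_1, \ldots, e_{s_m} \in \NN$ chosen so that $d_l e_j + c_{m,j} \ge 0$ for every $j$ (possible since $d_l > 0$). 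This replaces $N_l$ by $N_l' := N_l / \prod_j x_{m,j}^{e_j}$ and converts $N_l^{d_l} \cdot \prod_j x_{m,j}^{c_{m,j}}$ into $(N_l')^{d_l} \cdot \prod_j x_{m,j}^{c_{m,j} + d_l e_j}$, with non-negative level-$m$ exponents, leaving all other parameters and exponents intact. Iterating over all relevant $m < i_0$ clears every negative exponent, so in the final ring $R_s$, $N$ is a monomial in the new regular parameters with exponents in $\NN$.

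The key step, and essential use of the rational independence of $\nu(x_{i_0,1}), \ldots, \nu(x_{i_0,s_{i_0}})$ modulo $\Gamma_{\nu,i_0-1}$, is invoking Zariski's Perron algorithm to guarantee that the level-$i_0$ rewriting has only non-negative integer exponents $d_k$ and total value preserved; once this is in hand, the subsequent $(3,m)$ transforms at lower levels reduce to an elementary bookkeeping of exponents.
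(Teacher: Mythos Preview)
Your proof is correct and takes essentially the same approach as the paper: both identify the highest level $i_0$ (the paper's $l$) where the exponents differ, invoke Zariski's Theorem~2 via a \emph{sequence} of type $(1,i_0)$ Perron transforms (your phrasing ``apply a Perron transform'' should be read as a sequence) to make the level-$i_0$ part of $M_2/M_1$ a monomial with some strictly positive exponent $d_l$, and then use type $(3,m)$ transforms at that index $l$ to absorb the negative lower-level exponents. The only cosmetic differences are that you track the quotient $N=M_2/M_1$ directly while the paper tracks $M_1,M_2$ separately, and you clear each lower level in one type $(3,m)$ step with large exponents whereas the paper iterates with exponent~$1$.
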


\begin{proof} Suppose that $\nu(M_1)<\nu(M_2)$. There exists a largest index $l$ such that $\prod_jx_{l,j}^{a_{l,j}}\ne \prod_jx_{l,j}^{b_{l,j}}$.
Then $\nu(\prod_jx_{l,j}^{a_{l,j}})<\nu(\prod_jx_{l,j}^{b_{l,j}})$.  By \cite[Theorem 2]{Z1}, there exists a sequence of Perron transforms of type (1,l)  $R\rightarrow R_1$ along $\nu$ such that $\prod_jx_{l,j}^{a_{l,j}}$ divides $\prod_jx_{l,j}^{b_{l,j}}$ in $R_1$. Writing $M_1$ and $M_2$ in the regular parameters $\{z(1)_{i,j}\}$ of $R_1$ as
$$
M_1=\prod x(1)_{i,j}^{a(1)_{i,j}}\mbox{ and }M_2=\prod x(1)_{i,j}^{b(1)_{i,j}},
$$
where the product is over $1\le i\le u$ amd $1\le j\le s_i$,
we have that 
$$
M_2=\left(\prod_{i<l,j}x(1)_{i,j}^{b(1)_{i,j}}\right)\left(\prod_j x(1)_{l,j}^{b(1)_{l,j}}\right)
\left(\prod_{i>l,j}x(1)_{i,j}^{a(1)_{i,j}}\right)
$$
with $b(1)_{l,j}-a(1)_{l,j}\ge 0$ for all $j$ and for some $j$, $b(1)_{l,j}-a(1)_{l,j}> 0$.
Without loss of generality, this occurs for $j=1$. (If $b(1)_{l,j}=a(1)_{l,j}$ for all $j$, then $a_{l,j}=b_{l,j}$ for all $j$ in contradiction to our choice of $l$.)

Now perform a  sequence of Perron transforms of type (3,n) for $1\le n<l$, $R_1\rightarrow R_m$ along $\nu$ defined by
$x(t)_{l,1}=x(t+1)_{l,1}x(t+1)_{i,j}$ for $i<l$ and $j$ such that $b(t)_{i,j}<a(t)_{i,j}$ where 
$$
M_1=\prod x(t)_{i,j}^{a(t)_{i,j}}\mbox{ and }M_2=\prod x(t)_{i,j}^{b(t)_{i,j}}
$$
to achieve that $M_1$ divides $M_2$ in $R_m$.
\end{proof}

\subsection{Sequences of good monoidal transform sequences}

We now define a {\it good monoidal transform sequence} along $\nu$, which will be abbreviated as a GMTS.
Suppose that $T$ satisfies the conditions of Lemma \ref{Lemma1} and 
$\{\overline x_{i,j}\}$  are good parameters in $T$. Let 
$\{x_{i,j}\}$ be a good change of parameters in $T$. Let $T\rightarrow T_1$
be a Perron transform of one of the types (1,m), (2,m) or (3,m) of the previous  subsection, giving good parameters
$\{\overline x(1)_{i,j}\}$  in $T_1$. Then we call $T\rightarrow T_1$, with the parameters $\{\overline x_{i,j}\}$  and good change of parameters $\{x_{i,j}\}$ in $T$ and good parameters
$\{\overline x(1)_{i,j}\}$ in $T_1$ a good monoidal transform sequence. 

Suppose that $T(0)$ satisfies the conditions of Lemma \ref{Lemma1} and 
$\{\overline x(0)_{i,j}\}$  are good parameters in $T$.
A sequence of GMTSs is a sequence
$$
T(0)\rightarrow T(1)\rightarrow \cdots \rightarrow T(n)
$$
of GMTS. The good parameters of $T(i)$ are $\{\overline x(i)_{k,l}\}$ as determined by the preceding  GMTS $T(i-1)\rightarrow T(i)$, and a good change of parameters $\{x(i)_{k,l}\}$.

\section{Embedded resolution by Perron transforms}

We continue to use the notation of Section \ref{SecPrem}, and to assume that $\mbox{char}(k)=0$.

In the following proof we use the fact that the sequences of monoidal transforms constructed in the algorithms of 
\cite{E1} are GMTSs. This fact follows from the proofs in these papers.  
We have that \cite[Theorem 4.3]{E1},
explaining the construction of a sequence of monodial transforms (with m=1) from a given UTS (a uniformizing transformation sequence) is a special case of  the proof of Theorem 4.8 \cite{Ast}.   On line -6 from the bottom of page 79, in Step 3 of \cite{Ast}, it is explicitely stated and shown that  the sequence of monoidal transforms is a GMTSs.

\begin{Theorem}\label{Theorem1} Suppose that $T$ satisfies the conditions of Lemma \ref{Lemma1} and that $T$ has a very good system of parameters $\{\overline x_{i,j}\}$.
Suppose that $f\in T$.
Then there exists a sequence of GMTSs 
$$
T=T_0\rightarrow T_1\rightarrow \cdots \rightarrow T_m
$$
along $\nu$ such that 
$$
f=x_{1,1}(m)^{d_{1,1}}\cdots x(m)_{1,s_1}^{d_{1,s_1}}x(m)_{2,1}^{d_{2,1}}\cdots x(m)_{2,s_2}^{d_{2,s_2}}x(m)_{3,1}^{d_{3,1}}\cdots x(m)_{u,s_u}^{d_{u,s_u}}\gamma
$$
where $\gamma$ is a unit in $T_m$ and $d_{ij}\in\NN$ for all $i,j$.
\end{Theorem}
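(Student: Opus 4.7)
The plan is to invoke the embedded resolution algorithm of ElHitti \cite[Theorem 4.3]{E1}, which generalizes the rank one principalization of Zariski \cite[Theorem 2]{Z1}, applied to the element $f \in T$. That algorithm constructs a sequence of Perron transforms of types $(1,m)$, $(2,m)$ and $(3,m)$ along $\nu$ at the end of which $f$ becomes a monomial in the regular parameters times a unit. The substantive content of the present theorem is then to verify that the sequence of monoidal transforms produced can be organized as a sequence of GMTSs in the sense of the previous subsection, tracking the good change of parameters at each stage.

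The strategy is induction on the rank $u$ of $\nu$. For $u = 1$, the very good parameters of $T$ are ordinary regular parameters whose values form a rational basis of $\Gamma_{\nu}\otimes\QQ$, and Zariski's theorem applies directly, producing only transforms of types $(1,1)$ and $(2,1)$. For $u > 1$, one first uses transforms at level $m = 1$ to monomialize the image of $f$ modulo $P_{T,2}$ (treating $\nu_1$ essentially as a rank one valuation on the quotient), and then uses Perron transforms of type $(3,n)$ with $n < k$ to raise the monomialization to the higher levels, invoking Proposition \ref{PropZa} at each stage to force divisibilities between monomials inferred from the ordering of their $\nu$-values. The hypothesis that $T$ has \emph{very good} parameters, i.e.\ $\{1,\ldots,u\}$-good, ensures that every $P_{T,i}$ is generated by a tracked subset of the parameters, which is needed to formulate the monomialization statement at each level simultaneously.

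At each Perron transform the induced parameters on the target ring are $S'$-good for $S' = \{j \in S \mid j > m\}$, as recorded in the construction of the $(2,m)$-transform; this is why the very good structure at the start is essential, since one loses good indices from the bottom up as the algorithm proceeds, but one only needs to monomialize at each level once, so the remaining resolution can always still be carried out. The choice of the auxiliary generator $y$ in the $(2,m)$-transform, together with the observation that $\lambda \le m$ when $m+1 \in S$, is exactly what guarantees that no good index larger than what is about to be processed is destroyed.

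The main obstacle I expect is purely bookkeeping: one must check, at each Perron transform, that the good parameter structure is preserved and that the good change of parameters used in the next GMTS can be chosen compatibly with the algorithm's prescription. As the paragraph preceding the theorem indicates, this verification is already essentially contained in the proofs of \cite[Theorem 4.3]{E1} and \cite[Theorem 4.8]{Ast} (in particular Step 3, page 79 of \cite{Ast}, where it is stated explicitly that the constructed sequence is a GMTS), so the proof reduces to citing these results and assembling them through the inductive framework above.
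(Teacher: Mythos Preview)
Your final paragraph is the correct summary: the proof is indeed a citation of ElHitti's results, and the verification that the constructed sequence is a GMTS is contained in \cite{E1} and Step~3 of \cite[Theorem~4.8]{Ast}. On that level your proposal matches the paper.

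However, the inductive mechanism you describe in the middle is not what the paper does, and as written it does not quite work. You propose induction on the rank $u$, starting at level $m=1$, monomializing ``the image of $f$ modulo $P_{T,2}$'' and then using type-$(3,n)$ transforms together with Proposition~\ref{PropZa} to ``raise'' to higher levels. There are two problems. First, the indexing is reversed: $\nu_1=\nu$ is the full rank-$u$ valuation, not rank one, and level-$m=1$ Perron transforms act on the parameters $x_{1,j}$ whose values lie in the \emph{smallest} convex subgroup $\Gamma_{\nu,1}$; this is the finest level, not the coarsest. Second, and more importantly, Proposition~\ref{PropZa} and type-$(3,n)$ transforms only compare two given monomials; they cannot by themselves turn a general element into a monomial times a unit. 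The passage from one rank level to the next requires the dedicated lifting apparatus of ElHitti.

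The paper's actual argument runs in the opposite direction. One sets $k=\min\{i\mid\nu_i(f)<\infty\}$ and works first in the \emph{localization} $R_k=T_{P_{T,k}}$ (not a quotient) under the coarser valuation $\nu_k$, applying \cite[Theorem~4.14]{E1} and \cite[Theorem~5.6]{E1} to obtain a sequence of GMTSs along $\nu_k$ after which $f$ is a monomial in the level-$k$ parameters times a unit in $R_k(n_{k,1})$. One then invokes \cite[Lemma~5.3, Remark~5.4, Lemma~5.5]{E1} to produce a compatible sequence of GMTSs along $\nu_{k-1}$ in $R_{k-1}=T_{P_{T,k-1}}$ whose localization at $P_{R_{k-1}(\cdot),k}$ recovers the previous step and such that $f$ is now a monomial in the level-$(k-1)$ and level-$k$ parameters times an element of $\nu_k$-value zero. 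Descending induction on $k$ then reaches $R_1=T$. So the key citations are \cite[4.14, 5.6]{E1} for the monomialization step and \cite[5.3--5.5]{E1} for the lifting step, rather than \cite[Theorem~4.3]{E1}.
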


\begin{proof} Let $k=\min\{i\mid\nu_i(f)<\infty\}$. By 
\cite[Theorem 4.14]{E1} (which is applicable when $\nu$ has arbitrary rank and $\nu_1(f)<\infty$, using some Perron transforms of type (3,m)) and 
\cite[Theorem 5.6]{E1}, there exists a sequence of GMTSs along $\nu_k$
$$
R_k=T_{P_{T,k}}\rightarrow R_k(1)\rightarrow \cdots R_k(n_{k,1})
$$
such that $R_k(n_{k,1})$ has a very good change of parameters 
$$
x_k(n_{k,1})_{1,1},\ldots,x_k(n_{k,1})_{1,s_k},\ldots
$$
such that $f=x_k(n_{k,1})_{1,1}^{d_{k,1}}\cdots x_k(n_{k,1})^{d_{k,s_k}}\gamma_k$ where $d_{k,1},\ldots,d_{k,s_k}\in \NN$ and $\gamma_k\in R_k(n_{k,1})$ is a unit.

Now by 
\cite[Lemma 5.3]{E1} (on line 11 of the statement of the lemma it should be 
``$(T_1)_{P_{T_1}^1}=R_1$''), \cite[Remark 5.4]{E1} and \cite[Lemma 5.5]{E1}, there exists a sequence of GMTSs along $\nu_{k-1}$,
$$
R_{k-1}=T_{P_{T,k-1}}\rightarrow R_{k-1}(1)\rightarrow \cdots \rightarrow R_{k-1}(n_{k-1,1})
$$
such that $R_{k-1}(n_{k-1,1})_{P_{R_{k-1}(n_{k-1,1}),k}}=R_k(n_{k,i})$ and $R_{k-1}(n_{k-1,1})$ has a very good change of parameters 
$$
x_{k-1}(n_{k-1,1})_{1,1},\ldots,x_{k-1}(n_{k-1,1})_{1,s_{k-1}},x_{k-1}(n_{k-1,1})_{2,1},\ldots,x_{k-1}(n_{k-1,1})_{2,s_{k}},\ldots
$$
such that
$$
f=x_{k-1}(n_{k-1,1})_{2,1}^{d_{k,1}}\cdots x_{k-1}(n_{k-1,1})_{2,s_k}^{d_{k,s_k}}\overline\gamma_k
$$
where $\overline \gamma_k\in R_{k-1}(n_{k-1,1})$ satisfies $\nu_k(\overline\gamma_k)=0$.


By descending induction on $k$, successively applying 
\cite[Theorem 4.14]{E1}, \cite[Theorem 5.6]{E1} and then  \cite[Lemma 5.3]{E1}, \cite[Remark 5.4]{E1} and \cite[Lemma 5.5]{E1}, we obtain the conclusions of the theorem.

\end{proof}

\section{Monomial extensions}

Suppose that $K\rightarrow L$ is a finite extension of algebraic function fields over a field $k$ of characteristic zero, $\nu$ is a valuation of $K/k$ and $\omega$ is an extension of $\nu$ to $L$. 
Let 
$$
e=e(\omega|\nu)=[\Gamma_{\omega}:\Gamma_{\nu}], f=f(\omega|\nu)=[V_{\omega}/m_{\omega}:V_{\nu}/m_{\nu}].
$$

Suppose that $R\rightarrow S$ is an extension of algebraic regular local rings of $K$ and $L$ respectively such that $\omega$ dominates $S$ and $S$ dominates $R$. 
Suppose that $\{z_{i}\}$ are regular parameters in $R$ and $\{w_{j}\}$ are regular parameters in $S$. We will say that $R\rightarrow S$ is locally monomial (with respect to these systems of parameters) if there exists an $n\times n$ matrix $C=(c_{ij})$ with coefficients in $\NN$, where $n=\dim R=\dim S$ with ${\rm Det}(C)\ne 0$,
and units $\alpha_{i}\in S$ such that   
\begin{equation}\label{eq20}
z_{i}=\prod_{j=1}^n w_{j}^{c_{ij}}\alpha_{j}\mbox{ for }1\le i\le n.
\end{equation}

The following theorem is the local monomialization theorem proved in \cite[Theorem 1.1]{Ast}.

\begin{Theorem}\label{Theorem3} Suppose that $R^*\rightarrow S^*$ is an extension of algebraic regular local rings of $K$ and $L$ respectively such that $\omega$ dominates $S^*$ and $S^*$ dominates $R^*$. 
There exist sequences of monoidal transforms $R^*\rightarrow R$  and $S^*\rightarrow S$ along $\omega$ such that  $S$ dominates $R$ and $R\rightarrow S$ is locally monomial.  We can further construct $R\rightarrow S$ so that the regular parameters $z_i$ in $R$ and $w_j$ in $S$ giving the monomial form are very good parameters in $R$ and $S$ respectively. 
\end{Theorem}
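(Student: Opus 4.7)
The plan is to bootstrap from Theorem \ref{Theorem1} by applying embedded resolution successively to each regular parameter of $R^*$ viewed inside $S^*$. First I would carry out preliminary reductions. Applying Lemma \ref{Lemma1} to both $\nu$ and $\omega$ and passing to a common birational extension, I can replace $R^*$ by a birational extension $R_0 \subset V_{\nu}$ satisfying conditions 1) and 2) for $\nu$, and $S^*$ by a birational extension $S_0 \subset V_{\omega}$ satisfying them for $\omega$, with $S_0$ still dominating $R_0$. Additional monoidal transforms along $\nu$ and $\omega$ arrange that both $R_0$ and $S_0$ admit very good systems of parameters. Under the trdeg-zero conditions in 1), Abhyankar's inequality forces $\dim R_0 = \dim S_0 = n$, and the number of very good parameters at each level matches on the two sides.

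Next comes the iterative monomialization, which is the heart of the proof. Let $z_1, \ldots, z_n$ be very good parameters of $R_0$, regarded as elements of $S_0$. Apply Theorem \ref{Theorem1} with $f = z_1$ to obtain a sequence of GMTSs $S_0 \to S_1$ along $\omega$ after which $z_1 = \prod_{j} w_j^{c_{1j}}\alpha_1$ in the very good parameters $w_j$ of $S_1$, with $\alpha_1$ a unit. Then apply Theorem \ref{Theorem1} to $z_2 \in S_1$, and so on. After $n$ rounds I have $S = S_n$ together with an expression $z_i = \prod_j w_j^{c_{ij}}\alpha_i$ for every $i$. The crux is showing that the monomial-times-unit form of each previously handled $z_k$ survives the subsequent GMTSs. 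Inspection of the Perron transforms shows that every old very good parameter is a pure monomial in the new very good parameters for types $(1,m)$ and $(3,m)$, while for type $(2,m)$ the auxiliary lift variable $y$ introduces a unit correction that can be absorbed into $\alpha_k$. This preservation step is the main obstacle and is where the careful bookkeeping of good changes of parameters along a GMTS is used.

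Finally, I would verify $\det(c_{ij}) \ne 0$. The key observation is that since $\omega(z_{i,j}) = \nu(z_{i,j}) \in \Gamma_{\nu,i}\setminus\Gamma_{\nu,i-1}$ and every $\omega(w_{k,l})$ with $k > i$ lies outside $\Gamma_{\omega,i}$, the impossibility of cancellation in the positive cone of the ordered group $\Gamma_\omega$ forces $c_{(i,j),(k,l)} = 0$ whenever $k > i$. Thus the matrix $C$ is block triangular with diagonal blocks $C_i$ of size $s_i \times s_i$; each $C_i$ is the transition matrix between the two $\QQ$-rational bases $\nu(z_{i,1}), \ldots, \nu(z_{i,s_i})$ and $\omega(w_{i,1}), \ldots, \omega(w_{i,s_i})$ of $(\Gamma_{\nu,i}/\Gamma_{\nu,i-1}) \otimes \QQ = (\Gamma_{\omega,i}/\Gamma_{\omega,i-1})\otimes\QQ$, and is therefore invertible. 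That the resulting $z_i$ and $w_j$ are very good parameters in $R$ and $S$ respectively is automatic from the construction via GMTSs. The full execution appears in \cite[Theorem 1.1]{Ast}.
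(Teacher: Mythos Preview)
The paper does not prove this theorem; it simply cites \cite[Theorem 1.1]{Ast}. Your proposal also ends by citing \cite{Ast}, but the sketch you give before that citation is not how the proof goes, and in fact contains a genuine gap.

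The problem is in your use of Theorem \ref{Theorem1}. That theorem makes a single $f$ into a monomial, times a unit, in the parameters $w_{i,j}$ with $1\le j\le s_i$ \emph{only}---the ``rational rank'' parameters. It says nothing about the parameters $w_{i,j}$ with $s_i<j\le t_i$. So after your iterative procedure, every $z_i$ is a monomial in the $w_{k,l}$ with $l\le s_k$, times a unit. In the $n\times n$ matrix $C$ (where $n=\sum t_i$), every column indexed by a pair $(k,l)$ with $l>s_k$ is therefore zero, and $\det(C)=0$ whenever the valuation is not Abhyankar (i.e., whenever some $t_i>s_i$). Your block-triangularity argument silently assumes $s_i=t_i$: you speak of $s_i\times s_i$ diagonal blocks, but the blocks coming from the parameter indexing are $t_i\times t_i$. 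So your determinant argument only works in the Abhyankar case, which is exactly the case already handled in \cite[Theorem 1.5]{CN}.

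There is a second, related issue in the preservation step for type $(2,m)$ transforms. After such a transform, the old $x_{m,j}$ for $j\le s_m$ become monomials in $N_1,\ldots,N_{s_m},N_r$, and $N_r$ is then \emph{discarded} in favour of the lift $y$. When $m>1$ one can have $\nu(N_r)>0$, so $N_r$ is not a unit and is not a monomial in the new parameters; the ``unit correction'' you invoke is not available. The actual proof in \cite{Ast} does not proceed by fixing $R$ and monomializing the $z_i$ one at a time in $S$; it performs coordinated sequences of monoidal transforms on \emph{both} $R$ and $S$, with a delicate induction that simultaneously controls the form of the map and the parameters on each side.
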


With the notation of the conclusion of Theorem \ref{Theorem3},  we have by \cite[Theorem 4.2]{CP} that there exists a birational extension $R\rightarrow \overline R$ such that $\overline R$ is normal, $S$ dominates $R$  and $S$ is a localization of the integral closure of $\overline R$ in $L$. 



The following proposition is Theorem 6.1 \cite{CP}.

\begin{Proposition}\label{Prop1} Let $g_{1},\ldots,g_{f}$ be a basis of $V_{\omega}/m_{\omega}$ over $V_{\nu}/m_{\nu}$.  Then there exist an algebraic local ring $R'$ of $K$ which is dominated by $V_{\nu}$ such that whenever $R\rightarrow S$ is  an extension such that $R$ is an algebraic regular local ring of $K$ and $S$ is an algebraic regular local ring of $L$ which is dominated by $\omega$ and dominates $R$ such that $R\rightarrow S$ is locally monomial with regular parameters satisfying (\ref{eq20}), then
 $[S/m_S:R/m_R]=f$, $|{\rm Det}(C)|=e$, $[{\rm QF}(\hat S):{\rm QF}(\hat R)]=e$ and $g_{1},\ldots,g_{f}$ is a basis of $S/m_S$ over $R/m_R$.
 \end{Proposition}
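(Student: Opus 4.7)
The plan is to exhibit $R'$ as an algebraic local ring of $K$ that already encodes the ramification and residue data of $\omega/\nu$, and then show that any locally monomial $R\to S$ with $R$ dominating $R'$ is forced to realize $e$ and $f$ exactly through its matrix $C$ and its residue field extension. First, I would fix lifts $G_{1},\ldots,G_{f}\in V_{\omega}$ of the basis $g_{1},\ldots,g_{f}$, together with elements $h_{1},\ldots,h_{e}\in L^{\times}$ whose values $\omega(h_{k})$ enumerate the cosets of $\Gamma_{\nu}$ in $\Gamma_{\omega}$; this is finite data since $[L:K]<\infty$. I would then take $R'$ to be a sufficiently large algebraic local ring of $K$ dominated by $\nu$, constructed (using Lemma \ref{Lemma1}) from a ring containing the coefficients of the minimal polynomials over $K$ of the chosen $G_{i}$ and $h_{k}$, so that in the integral closure of $R'$ in $L$ each $G_{i}$ already appears and every $h_{k}$ is, up to a monomial, integral over $R'$.

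For the verification, assume $R\to S$ is locally monomial with (\ref{eq20}) and $R$ dominates $R'$. Applying $\omega$ to $z_{i}=\prod_{j}w_{j}^{c_{ij}}\alpha_{i}$ yields $\nu(z_{i})=\sum_{j}c_{ij}\,\omega(w_{j})$, so the subgroup $\Lambda_{R}=\sum_{i}\ZZ\,\nu(z_{i})\subseteq\Gamma_{\nu}$ has index $|{\rm Det}(C)|$ inside $\Lambda_{S}=\sum_{j}\ZZ\,\omega(w_{j})\subseteq\Gamma_{\omega}$. Because the parameters are very good, the $w_{j}$'s split the rank filtration of $\omega$ layer by layer, and one verifies that $\Lambda_{S}\cap\Gamma_{\nu}=\Lambda_{R}$; this produces an injection $\Lambda_{S}/\Lambda_{R}\hookrightarrow\Gamma_{\omega}/\Gamma_{\nu}$ and the bound $|{\rm Det}(C)|\le e$. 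In parallel, the chain $R/m_{R}\hookrightarrow S/m_{S}\hookrightarrow V_{\omega}/m_{\omega}$ yields $[S/m_{S}:R/m_{R}]\le f$. The opposite inequalities come from the construction of $R'$, together with the observation after Theorem \ref{Theorem3}: $S$ is the localization at $\omega$ of the integral closure in $L$ of a birational extension of $R$, so since $G_{i}$ and $h_{k}$ are integral over $R'\subseteq R$, they lie in $S$. Their residues modulo $m_{S}$ yield $f$ elements of $S/m_{S}$ linearly independent over $R/m_{R}$ (forcing equality of residue degrees and identifying the asserted basis), and their values yield $e$ distinct cosets inside $\Lambda_{S}/\Lambda_{R}$ (forcing $|{\rm Det}(C)|\ge e$). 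The equality $[{\rm QF}(\hat S):{\rm QF}(\hat R)]=e$ then drops out of the monomial form, since $\hat S$ is a free $\hat R$-module whose rank is dictated by $|{\rm Det}(C)|$ once the residue field extension is properly absorbed.

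The main obstacle is to choose $R'$ so that the auxiliary elements $G_{i}$ and $h_{k}$ remain accessible in every further birational extension along $\omega$; this is a stability statement under arbitrary birational monoidal transforms of $R'$. It will be arranged by making $R'$ large enough to contain the coefficients of the integral equations of the $G_{i}$ and $h_{k}$ over $K$, localized at $\nu$, so that integrality over $R'$ (which is preserved by any birational extension) guarantees membership in $S$ after every such $R\supseteq R'$. Once $R'$ is fixed with this property, the persistence of very good parameters under the GMTS formalism of Section 3 ensures that the residue-degree and ramification arguments above apply uniformly to every admissible monomial $R\to S$.
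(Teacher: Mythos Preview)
The paper does not prove this proposition; it simply records it as \cite[Theorem~6.1]{CP}. So there is no in-paper argument to compare against, and what you are really attempting is a reconstruction of the Cutkosky--Piltant proof.

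Your outline has the right architecture (enlarge $R'$ to capture the residue and ramification data, then squeeze $|{\rm Det}(C)|$ and $[S/m_S:R/m_R]$ between upper and lower bounds), and the lower bound $[S/m_S:R/m_R]\ge f$ via lifting the $g_i$ into $S$ is fine. But the other three inequalities do not follow from what you wrote.

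\emph{Upper bound on the residue degree.} The chain $R/m_R\hookrightarrow S/m_S\hookrightarrow V_\omega/m_\omega$ does not give $[S/m_S:R/m_R]\le f$. In general $V_\nu/m_\nu$ is an infinite algebraic extension of $R/m_R$, so $[V_\omega/m_\omega:R/m_R]$ is infinite and containment of $S/m_S$ in $V_\omega/m_\omega$ yields no bound at all.

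\emph{Upper bound on the determinant.} Your claim $\Lambda_S\cap\Gamma_\nu=\Lambda_R$ is the entire content of $|{\rm Det}(C)|\le e$, and you justify it by invoking ``very good parameters.'' The proposition does not assume this; it only assumes the bare monomial form (\ref{eq20}). Without extra hypotheses there is no reason a $\ZZ$-combination of the $\omega(w_j)$ that happens to lie in $\Gamma_\nu$ must already be a $\ZZ$-combination of the $\nu(z_i)$. (You also implicitly use that the $\omega(w_j)$ are $\ZZ$-independent, which is not given.)

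\emph{Lower bound on the determinant.} Even if your coset representatives $h_k$ are integral over $R$ and hence lie in $S$, their values $\omega(h_k)$ need not belong to $\Lambda_S=\sum_j\ZZ\,\omega(w_j)$: an element of a regular local ring can have a value that is not an integer combination of the values of a regular system of parameters. So you cannot read off $e$ distinct cosets of $\Lambda_R$ in $\Lambda_S$ from the $h_k$.

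The missing mechanism is the completion. From the monomial form one computes the degree $[{\rm QF}(\hat S):{\rm QF}(\hat R)]$ directly in terms of $[S/m_S:R/m_R]$ and $|{\rm Det}(C)|$, and it is this composite quantity that one compares to the henselized (defectless) degree; the role of $R'$ is to ensure that once $R$ dominates $R'$ the comparison is sharp, after which the two factors can be separated. Your sketch never makes this connection, and without it none of the upper bounds go through.
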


\begin{Lemma}\label{Lemma2}  Suppose that $R$ and $S$ satisfy the conclusions of Lemma \ref{Lemma1} for $\nu$ and $\omega$ respectively,
$R\rightarrow S$ is a locally monomial extension and $f\in S$. Then there exists $g\in \overline R$ such that $f$ divides $g$ in $S$.
\end{Lemma}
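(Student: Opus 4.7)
The plan is to exploit the structural description of $\overline R$ that was recorded in the paragraph following Theorem~\ref{Theorem3}: the hypothesis that $R\to S$ is locally monomial provides a normal birational extension $R\to\overline R$ of algebraic local rings of $K$, together with the integral closure $T$ of $\overline R$ in $L$, such that $S=T_{\mathfrak q}$, where $\mathfrak q=m_S\cap T$. The proof of the lemma then reduces to a standard minimal-polynomial argument and does not need to call on the Perron transform machinery of the earlier sections.

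I would first reduce the statement to a question about an integral element. We may assume $f\neq 0$. Since $S=T_{\mathfrak q}$, write $f=a/c$ with $a,c\in T$ and $c\notin\mathfrak q$; then $c$ is a unit of $S$, $a\neq 0$, and $fS=aS$. It therefore suffices to produce a nonzero $g\in\overline R$ with $g\in aT$. The element $a\in T$ is integral over the normal domain $\overline R$ whose fraction field is $K$, so the minimal polynomial of $a$ over $K$,
$$p(X)=X^n+c_{n-1}X^{n-1}+\cdots+c_0,$$
has all its coefficients in $\overline R$: the Galois conjugates of $a$ over $K$ are integral over $\overline R$, hence their elementary symmetric functions $c_i$ lie in $K$ and are integral over $\overline R$, and therefore in $\overline R$ by normality. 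Minimality of $p$ forces $c_0\neq 0$, since otherwise $X$ would divide $p(X)$.

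Rewriting $p(a)=0$ as $c_0=-a(a^{n-1}+c_{n-1}a^{n-2}+\cdots+c_1)$ exhibits $c_0\in aT\subseteq aS=fS$, so $g:=c_0\in\overline R$ is the required element. I do not foresee a genuine obstacle once the identification $S=T_{\mathfrak q}$ is available; the one step that warrants care is the appeal to normality of $\overline R$ to bring the symmetric functions $c_i$ back into $\overline R$. The other hypotheses of the lemma (the conclusions of Lemma~\ref{Lemma1} for $R$ and $S$ and the explicit monomial form of $R\to S$) enter only indirectly, in that they ensure the existence of the birational extension $\overline R$ via \cite[Theorem~4.2]{CP}.
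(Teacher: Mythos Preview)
Your proof is correct and rests on the same underlying idea as the paper's: produce an element of $K$ by multiplying $a$ (your integral representative of $f$) by its conjugates, and invoke normality of $\overline R$ to land in $\overline R$. The paper passes to a Galois closure $F/K$, sets $g=\prod_{\sigma\in\mathrm{Gal}(F/K)}\sigma(\overline f)$, and then must argue separately that the cofactor $g/\overline f$ is integral and hence lies in the integral closure $\overline S$; you instead take $g=c_0$, the constant term of the minimal polynomial of $a$ over $K$, and read off the cofactor directly as $-(a^{n-1}+c_{n-1}a^{n-2}+\cdots+c_1)\in T$. Your route is a bit more economical---it avoids the Galois closure and the auxiliary ring $\overline T$, and the integrality of the cofactor is immediate---while the paper's norm argument is the classical formulation; both ultimately hinge on the single nontrivial point you flagged, namely that normality of $\overline R$ forces the symmetric functions of the conjugates into $\overline R$.
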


\begin{proof}
Let $F$ be a Galois closure of $L$ over $K$ and let $\overline S$ be the integral closure of $R$ in $L$ and $\overline T$ be the integral closure of 
$\overline R$ in $F$. There exists $\gamma\in \overline S$ such that $\gamma$ is a unit in $S$ and $\gamma f\in \overline S$. Let $\overline f=\gamma f$. Let $G=\mbox{Gal}(F/K)$ and $g=\prod_{\sigma\in G}\sigma(\overline f)$. Then $\sigma(g)=g$ for all $\sigma\in G$ so that $g\in K$.   Further,
$\sigma(\overline f)\in \overline T$ for all $\sigma\in G$ so $g$ is integral over $\overline R$. Thus $g\in \overline R$ since $\overline R$ is normal. Let $h=\frac{g}{\overline f}$. Then $h\in L$ and $h\in \overline T$ so $h$ is integral over $\overline S$. Thus $h\in \overline S$ since $\overline S$ is normal, and so $\gamma h\in \overline S\subset S$. Thus $f$ divides $g$ in $S$.
\end{proof}

\section{Analysis when $\epsilon(\omega|\nu)=e(\omega|\nu)$}

In this section, let $K$ be an algebraic function field over a field $k$ of characteristic zero and let $\nu$ be a valuation of $K/k$ ($\nu$ is trivial on $k$). Assume that $L$ is a finite extension of $K$ and $\omega$ is an extension of $\nu$ to $L$. The ramification index $e(\omega|\nu)$ and initial index $\epsilon(\omega|\nu)$ are defined in Section \ref{SecInt}.
Let
$$
e=e(\omega|\nu)\mbox{ and }\epsilon=\epsilon(\omega|\nu).
$$

The following proposition is Proposition 3.7 \cite{CN}. It holds very generally for finite extensions of valued fields $(K,\nu)\rightarrow (L,\omega)$.

\begin{Proposition}\label{PropGS} Suppose that $K$ is a field, $\nu$ is a valuation of $K$, $L$ is a finite extension field of $K$ and  $\omega$ is an extension of $\nu$ to $L$ such that 
$$
1<\epsilon(\omega|\nu)=e(\omega|\nu).
$$
Let $\Gamma_{\nu,1}$ be the first convex subgroup of $\Gamma_{\nu}$ and $\Gamma_{\omega,1}$ be the first convex subgroup of $\Gamma_{\omega}$. Then $\Gamma_{\omega,1}\cong \ZZ$ and in the short exact sequence of groups
\begin{equation}\label{eqAb3}
0\rightarrow \Gamma_{\omega,1}/\Gamma_{\nu,1}\rightarrow \Gamma_{\omega}/\Gamma_{\nu}\rightarrow (\Gamma_{\omega}/\Gamma_{\omega,1})/(\Gamma_{\nu}/\Gamma_{\nu,1})\rightarrow 0
\end{equation}
we have that 
$$
(\Gamma_{\omega}/\Gamma_{\omega,1})/(\Gamma_{\nu}/\Gamma_{\nu,1})=0
$$
and 
$$
\Gamma_{\omega}/\Gamma_{\nu}\cong \Gamma_{\omega,1}/\Gamma_{\nu,1}\cong \ZZ_e.
$$
\end{Proposition}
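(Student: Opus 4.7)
The plan is to show that under $\epsilon(\omega|\nu) = e > 1$ every witness to the initial-index equality already lives in the first convex subgroup $\Gamma_{\omega,1}$, so that the entire extension index $e$ is concentrated there. Set $A = \{g \in (\Gamma_\omega)_{\ge 0} : g < (\Gamma_\nu)_{>0}\}$, so $|A| = \epsilon = e$ by hypothesis. Distinct elements of $A$ lie in distinct $\Gamma_\nu$-cosets: if $g_1, g_2 \in A$ with $g_1 \ge g_2$ and $g_1 - g_2 \in \Gamma_\nu$, the defining property of $A$ applied to $g_1$ forbids $g_1 - g_2$ from being positive (else $g_1 < g_1 - g_2$ would force $g_2 < 0$), so $g_1 = g_2$. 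Thus $A \hookrightarrow \Gamma_\omega/\Gamma_\nu$, and cardinality comparison upgrades this to a bijection.

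Next I would establish the two convex-subgroup relations $\Gamma_{\nu,1} \subset \Gamma_{\omega,1}$ and $\Gamma_{\omega,1} \cap \Gamma_\nu = \Gamma_{\nu,1}$. Note that $e > 1$ forces $\nu$ nontrivial (value groups are torsion-free), so both first convex subgroups exist and are nonzero. For any positive $g' \in \Gamma_{\omega,1}$, $eg' \in \Gamma_\nu \cap \Gamma_{\omega,1}$ is positive, making this intersection a nonzero convex subgroup of $\Gamma_\nu$, hence containing the smallest such subgroup $\Gamma_{\nu,1}$. The reverse inclusion $\Gamma_{\omega,1} \cap \Gamma_\nu \subset \Gamma_{\nu,1}$ then holds because the intersection has rank at most $1$, inherited from the archimedean $\Gamma_{\omega,1}$, so among convex subgroups of $\Gamma_\nu$ it can only be $\Gamma_{\nu,1}$. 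Together these produce an injection $\Gamma_{\omega,1}/\Gamma_{\nu,1} \hookrightarrow \Gamma_\omega/\Gamma_\nu$.

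The crux of the argument is the inclusion $A \subset \Gamma_{\omega,1}$. For positive $g \in A$, finite index gives $eg \in \Gamma_\nu$ with $eg > 0$. I would prove $eg \in \Gamma_{\nu,1}$ by contradiction: otherwise convexity places $eg$ above all of $\Gamma_{\nu,1}$, so $eg > eh$ for some positive $h \in \Gamma_{\nu,1}$, contradicting the consequence $eg < eh$ of $g < h \in (\Gamma_\nu)_{>0}$. Then $0 < g < eg \in \Gamma_{\omega,1}$ and the convexity of $\Gamma_{\omega,1}$ in $\Gamma_\omega$ force $g \in \Gamma_{\omega,1}$. Combined with the bijection $A \leftrightarrow \Gamma_\omega/\Gamma_\nu$, this shows $\Gamma_{\omega,1}/\Gamma_{\nu,1} \to \Gamma_\omega/\Gamma_\nu$ is surjective as well, hence an isomorphism, so the cokernel $(\Gamma_\omega/\Gamma_{\omega,1})/(\Gamma_\nu/\Gamma_{\nu,1})$ vanishes.

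Finally, to establish $\Gamma_{\omega,1} \cong \ZZ$ I would argue by contradiction: a dense embedding of $\Gamma_{\nu,1}$ into $\RR$ would produce arbitrarily small positive $h \in \Gamma_{\nu,1}$, preventing any positive $g \in \Gamma_\omega$ from satisfying $g < (\Gamma_\nu)_{>0}$, forcing $A = \{0\}$ and $\epsilon = 1$ against hypothesis. Hence $\Gamma_{\nu,1}$ is a discrete subgroup of $\RR$, so $\Gamma_{\nu,1} \cong \ZZ$, and $\Gamma_{\omega,1}$, being a rank-$1$ torsion-free group containing $\Gamma_{\nu,1}$ with finite index $e$, is likewise $\cong \ZZ$, with quotient $\ZZ_e$. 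I expect the main subtlety to lie in the archimedean argument placing $eg$ inside $\Gamma_{\nu,1}$, together with the rank bookkeeping for the convex-subgroup correspondence; the remainder is routine manipulation of cosets.
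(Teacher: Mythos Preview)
Your proof is correct. It differs from the paper's route mainly in the order of operations and in how explicitly you handle the set $A$. The paper (via \cite{CN}) first observes that $1<\epsilon<\infty$ forces $\Gamma_\omega$ to have a smallest positive element $\gamma_1$ (otherwise there would be infinitely many positive elements below any element of $A$), so that $\Gamma_{\omega,1}=\ZZ\gamma_1\cong\ZZ$ right away; it then proves the explicit description $A=\{0,\gamma_1,2\gamma_1,\dots,(\epsilon-1)\gamma_1\}$ with $\epsilon\gamma_1\in\Gamma_\nu$, which simultaneously exhibits $A\subset\Gamma_{\omega,1}$ and identifies $\Gamma_\omega/\Gamma_\nu$ as the cyclic group generated by the class of $\gamma_1$. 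You instead argue abstractly: a counting argument gives $A\leftrightarrow\Gamma_\omega/\Gamma_\nu$, a convexity contradiction on $eg$ gives $A\subset\Gamma_{\omega,1}$, and only at the end do you deduce $\Gamma_{\omega,1}\cong\ZZ$ from a density argument on $\Gamma_{\nu,1}$. Both are valid; the paper's route is shorter because once you have the smallest positive element, both $A\subset\Gamma_{\omega,1}$ and $\Gamma_{\omega,1}\cong\ZZ$ are immediate, whereas your argument recovers these facts separately. Your approach has the virtue of working directly with the exact sequence and making the role of the convex-subgroup correspondence $\Gamma_{\nu,1}=\Gamma_\nu\cap\Gamma_{\omega,1}$ explicit, which the paper leaves implicit.
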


The following proposition generalizes Proposition 7.4 of \cite{CN} from Abhyankar valuations on algebraic function fields to arbitrary valuations on characteristic zero algebraic function fields.

\begin{Proposition}\label{PropGoodForm} Suppose that $e(\omega|\nu)=\epsilon(\omega|\nu)$. Then there exist algebraic regular local rings $R$ of $K$ and $S$ of $L$ which are dominated by $\omega$ and $\nu$ respectively such that $S$ dominates $R$, $R$ dominates the ring $R'$ of Proposition \ref{Prop1} and $R$ has  good regular parameters $\{x_{i,j}\}$ and $S$ has  good regular parameters $\{y_{i,j}\}$ such that there is an expression
$$
x_{1,1}=\gamma y_{1,1}^e\mbox{ and }
x_{i,j}=y_{i,j}\mbox{ if $i>1$ or $j\ge 2$}
$$
where $\gamma$ is a unit in $S$. Further, if $e>1$, then $\nu(x_{1,1})$ is a generator of $\Gamma_{\nu,1}$ and $\omega(y_{1,1})$ is a generator of $\Gamma_{\omega,1}$. If $e=1$, then $\gamma=1$.
\end{Proposition}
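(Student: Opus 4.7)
The plan is to start from a local monomialization (Theorem \ref{Theorem3}) and refine the resulting monomial form via further GMTS, using Proposition \ref{PropGS} to pin down the needed structure.

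By Lemma \ref{Lemma1} and a preliminary birational extension, I would pick algebraic regular local rings $R_0$ of $K$ and $S_0$ of $L$ both satisfying the conclusions of Lemma \ref{Lemma1}, with $R_0$ dominating the ring $R'$ of Proposition \ref{Prop1}, $S_0$ dominating $R_0$, and $\omega$ dominating $S_0$. Applying Theorem \ref{Theorem3} then produces $R_0\to R$ and $S_0\to S$ with $R\to S$ locally monomial in very good parameters $\{z_{i,j}\}$ and $\{w_{i,j}\}$, so that $z_{i,j}=\prod_{k,l}w_{k,l}^{c_{(i,j),(k,l)}}\alpha_{i,j}$ with each $\alpha_{i,j}$ a unit in $S$ and $|\det C|=e$ by Proposition \ref{Prop1}. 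The convex-subgroup interpretation of very good parameters fixes the block structure of $C$: since $\nu(z_{i,j})\in\Gamma_{\nu,i}$ while $\omega(w_{k,l})$ lies outside $\Gamma_{\omega,i}$ (in a strictly larger archimedean class) for $k>i$, and exponents are non-negative, any positive $c_{(i,j),(k,l)}$ with $k>i$ would push $\nu(z_{i,j})$ out of $\Gamma_{\omega,i}$. Hence $c_{(i,j),(k,l)}=0$ whenever $k>i$, so $C$ is block lower triangular with square diagonal blocks $C_1,\ldots,C_u$ and $\prod_i|\det C_i|=e$. Proposition \ref{PropGS} identifies $\Gamma_{\omega,i}/\Gamma_{\omega,i-1}=\Gamma_{\nu,i}/\Gamma_{\nu,i-1}$ for every $i\ge 2$, and gives $s_1^{\nu}=s_1^{\omega}=1$ together with $\Gamma_{\nu,1}=e\Gamma_{\omega,1}\subset\Gamma_{\omega,1}\cong\ZZ$, so all the ``index contribution'' is concentrated in the first block.

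Next, apply further GMTS of the types of Section 3 together with good changes of parameters to simplify each diagonal block. For each $i\ge 2$, Perron transforms of types $(1,i)$ and $(3,i)$ drive $C_i$ to the identity (using that the relevant convex subquotients are already equal), yielding $z_{i,j}=w_{i,j}$ after absorbing units into parameters. For the first block (with $s_1=1$), Perron transforms of type $(1,1)$ applied on both sides normalize $\omega(w_{1,1})$ to a generator of $\Gamma_{\omega,1}$, which automatically forces $\nu(z_{1,1})=e\,\omega(w_{1,1})$; Proposition \ref{PropZa} combined with transforms of types $(2,1)$ and $(3,1)$ then clears the remaining entries of $C_1$ to the identity. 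Relabelling $x_{i,j}=z_{i,j}$ and $y_{i,j}=w_{i,j}$ gives $x_{1,1}=\gamma y_{1,1}^e$ (with $\gamma$ a unit in $S$) and $x_{i,j}=y_{i,j}$ otherwise. When $e=1$, $C$ is driven to the identity and the good change of parameters $x_{1,1}\mapsto x_{1,1}/\gamma$ gives $\gamma=1$.

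The main obstacle is organizing this GMTS sequence so that the transforms on $R$ and $S$ remain compatible and the (very) good parameter structure is preserved throughout. The first block is the delicate point: the extra parameters $z_{1,j}$ and $w_{1,j}$ for $j>s_1=1$ have values in $\Gamma_{\nu,1}$ that need not be rationally independent of $\nu(z_{1,1})$, so clearing the off-diagonal of $C_1$ to the identity requires the full freedom of good changes of parameters together with the Perron-algorithm statement of Proposition \ref{PropZa}.
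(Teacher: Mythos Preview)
Your overall strategy---start from local monomialization, then simplify the exponent matrix $C$ block by block---matches the paper's, but there is a real gap in the first block. When $e>1$ you correctly deduce $s_1=1$, but then the block $C_1$ has size $t_1\times t_1$, not $1\times 1$: the parameters $x_{1,2},\ldots,x_{1,t_1}$ and $y_{1,2},\ldots,y_{1,t_1}$ all have values in the rank-one group $\Gamma_{\nu,1}\cong\ZZ$, so their values are \emph{not} rationally independent of $\nu(x_{1,1})$. Perron transforms of type $(1,1)$ are vacuous here (the matrix is $1\times 1$), and neither type $(2,1)$, type $(3,1)$, nor Proposition \ref{PropZa} (which concerns monomial divisibility, not normalizing an exponent matrix) gives a mechanism to force $x_{1,j}=y_{1,j}$ for $2\le j\le t_1$. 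The paper does not attempt this with GMTS at all: it invokes \cite[Theorem 4.8]{CP}, a substantial external result, to arrange $x_1=\gamma y_1^{c_{11}}$ and $x_j=y_j$ for $2\le j\le t_1$ directly. Without that (or an equivalent argument), your first-block reduction does not go through.

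Two smaller points. For the remaining $(n-t_1)\times(n-t_1)$ block $\bar C$ with $|\det\bar C|=1$, the paper does not iterate Perron transforms: it performs a single birational extension $R_2\to R_3$ defined using $\bar C$ itself (after first absorbing the units into the $y$-variables via $\bar C^{-1}$). This is cleaner and avoids the compatibility bookkeeping you flag as an obstacle. Finally, in the $e=1$ case your proposed change $x_{1,1}\mapsto x_{1,1}/\gamma$ is illegitimate since $\gamma$ is a unit of $S$, not of $R$; the paper instead multiplies the $y_i$ by suitable unit monomials in the $\gamma_j$ (using $C^{-1}$) to kill all the units on the $S$-side, then makes a birational extension of $R$.
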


\begin{Remark}\label{RemarkGoodForm} We can assume that the parameters $\{x_{ij}\}$ and $\{y_{ij}\}$ are very good parameters in the conclusions of Proposition \ref{PropGoodForm}.
\end{Remark}

\begin{proof} By  Theorem \ref{Theorem3} and Proposition \ref{Prop1} there exist algebraic regular local rings $R_0$ of $K$ and $S_0$ of $L$ such that 
 $\omega$ dominates $S_0$, $S_0$ dominates $R_0$, $R_0$ has  very good  parameters $\{x_{ij}\}$ and $S_0$ has very good  parameters $\{y_{ij}\}$ satisfying the conclusions of Proposition \ref{Prop1}.
 
 We reindex the very good parameters $\{x_{ij}\}$ and $\{y_{ij}\}$ by
 $$
 x_j=x_{l,i}\mbox{ if $j=t_1+\cdots+t_l+i$ with $1\le i\le t_{l+1}$}
 $$
 and
 $$
 y_j=y_{l,i}\mbox{ if $j=t_1+\cdots+t_l+i$ with $1\le i\le t_{l+1}$}.
 $$ 
 These parameters  have a monomial form
 \begin{equation}\label{eq23}
 x_j=\gamma_jy_1^{c_{j1}}\cdots y_n^{c_{jn}}\mbox{ for $1\le j\le n$}
 \end{equation}
 where $C=(c_{ij})$ is an $n\times n$ matrix with ${\rm Det}(C)\ne 0$ and $\gamma_j$ are units in $S_0$. By the proof of \cite[Theorem 4.10]{CP}, we can assume that
 \begin{equation}\label{eq24}
 \begin{array}{lll}
 \Gamma_{\omega}/\Gamma_{\nu}&\cong& (\sum \omega(y_{i})\ZZ)/(\sum \nu(x_{i})\ZZ)\\
 &\cong& \ZZ^n/C^t\ZZ^n
 \end{array}
 \end{equation}
 so that
 \begin{equation}\label{eq22}
 |{\rm Det}(C)|=e.
 \end{equation}
 First suppose that $\epsilon(\omega|\nu)=e(\omega|\nu)>1$. Then $\Gamma_{\omega,1}\cong \ZZ$ and $\Gamma_{\omega,1}/\Gamma_{\nu,1}\cong \ZZ_e$ by Proposition \ref{PropGS}. In particular, $s_1=1$.
 
 By \cite[Theorem 4.8]{CP}, we may assume that
 $$
 x_1=\gamma y_1^{c_{11}}, x_2=y_2,\ldots,x_{t_1}=y_{t_1}
 $$
 where $\gamma$ is a unit in $S_0$. Then from (\ref{eq24}), we have that $c_{11}=e$ and $|{\rm Det}(\overline C)|=1$ where
 $$
 \overline C=\left(\begin{array}{ccc}
 c_{t_1+1,t_1+1}&\cdots&c_{t_1+1,n}\\
 \vdots\\
 c_{n,t_1+1}&\cdots&c_{n,n}
 \end{array}\right).
 $$
 We define a birational extension along $\nu$, $R_0\rightarrow R_1=R_0[x(1)_1,\ldots,x(1)_n]_{\nu}$ by
 $$
 x_j=\left\{\begin{array}{ll}
 x(1)_j&\mbox{ for }1\le j\le t_1\\
 x(1)_jx(1)_2^{c_{j2}}\cdots x(1)_{t_1}^{c_{j,t_1}}&\mbox{ for } t_1<j\le n
  \end{array}\right.
 $$
  to get that $S_0$ dominates $R_1$ and $R_1\rightarrow S_0$ is locally monomial with
 $$
 x(1)_j=\left\{\begin{array}{ll}
 \gamma y_1^e&\mbox{ if }j=1\\
 y_j&\mbox{ if }1<j\le t_1\\
 y_1^{c_{j1}}y_{t_1+1}^{c_{j,t_1+1}}\cdots y_n^{c_{j,n}}\alpha_j&\mbox{ if }t_1+1\le j\le n
 \end{array}\right.
 $$
 where $\alpha_j\in S_0$ are units.

 Since ${\rm Det}(\overline C)=\pm 1$, there exist $r_{t_1+1},\ldots,r_n\in \ZZ$ such that 
 $$
 \overline C\left(\begin{array}{c}r_{t_1+1}\\ \vdots\\ r_n\end{array}\right)=-\left(\begin{array}{c}
 c_{t_1+1,1}\\ \vdots \\c_{n,1}\end{array}\right).
 $$
 Define $d_{t_1+1},\ldots,d_n\in \NN$ by
 $$
 \left(\begin{array}{c} d_{t_1+1}\\ \vdots\\ d_n\end{array}\right)=\overline C
 \left(\begin{array}{c} 1\\ \vdots\\ 1\end{array}\right).
 $$
 There exists $v\in \ZZ_{>0}$ such that $r_i+ve>0$ for all $i$. Perform the sequence of GMTSs 
 of type (3,1) $S_0\rightarrow S_1$  along $\omega$ where $S_1$ has good parameters $\{y(1)_i\}$ defined by  
 $$
 y_i=\left\{\begin{array}{ll}
 y(1)_i&\mbox{ if }1\le i\le t_1\\
 y(1)_iy(1)_1^{r_i+ve}&\mbox{ if }t_1<i\le n.
 \end{array}\right.
 $$
 We have that $S_1=S_0[y_1(1),\ldots,y_n(1)]_{\omega}$  dominates $R_1$ and $R_1\rightarrow S_1$ is locally monomial. There exist units $\gamma_i'\in S_1$  and $g_i\in\NN$ such that 
 $$
 x(1)_i=\left\{\begin{array}{ll}
 \gamma y(1)_1^e&\mbox{ if }i=1\\
 y(1)_i&\mbox{ if }1<i\le t_1\\
 \gamma_i'y(1)_1^{evg_i}y(1)_{t_1+1}^{c_{i,t_1+1}}\cdots y(1)_n^{c_{i,n}}&\mbox{ if }t_1< i\le n.
 \end{array}\right.
 $$

 Now perform the sequence of GMTSs $R_1\rightarrow R_2$ of type (3,1) along $\nu$ defined by
 $$
 x(1)_i=\left\{\begin{array}{ll} x(2)_1&\mbox{ if }1\le i\le t_1\\
 x(2)_1^{vg_i}x(2)_i&\mbox{ if }t_1<i\le n.
 \end{array}\right.
 $$
 Then  $S_1$ dominates $R_2$ and there exist units $\gamma(1)_i\in S_1$ such that
 \begin{equation}\label{eq21}
 x(2)_i=\left\{\begin{array}{ll}
 \gamma y(1)_1^e&\mbox{ if }i=1\\
 y(1)_i&\mbox{ if }1<i\le t_1\\
 \gamma(1)_iy(1)_{t_1+1}^{c_{i,t_1+1}}\cdots y(1)_n^{c_{i,n}}&\mbox{ if }t_1<i\le n.
 \end{array}\right.
 \end{equation}
   Let $B=\overline C^{-1}$. Write
 $$
 B=\left(\begin{array}{lll}
 b_{t_1+1,t_1+1}&\cdots&b_{t_1+1,n}\\
 &\vdots&\\
 b_{n,t_1+1}&\cdots&b_{n,n}
 \end{array}
 \right)
 $$
 with $b_{i,j}\in \ZZ$. We now replace the $y(1)_i$ with the product of the unit
 $\gamma(1)_{t_1+1}^{-b_{i,t_1+1}}\cdots \gamma(1)_n^{-b_{i,n}}$ and $y(1)_i$ for $t_1+1\le i\le n$ to get $\gamma_i(1)=1$ for $t_1+1\le i\le n$ in (\ref{eq21}).
 
Now define a birational transformation $R_2\rightarrow R_3$ along $\nu$ by $R_3=R_2[x(3)_{t_1+1},\ldots,x(3)_n]_{\nu}$ where $R_3$ has regular parameters $\{x(3)_i\}$ defined by
$$
x(3)_i=\left\{\begin{array}{ll}
x(2)_i&\mbox{ if }1\le i\le t_1\\
x(2)_{t_1+1}^{c_{i,t_1+1}}\cdots x(2)_n^{c_{i,n}}&\mbox{ for }t_1+1\le i\le n.
\end{array}\right.
$$ 
The ring $R_3$ is a regular local ring with regular parameters $x(3)_1,\ldots,x(3)_n$. We have that $S_1$ dominates $R_3$ and
$$
x(3)_i=\left\{\begin{array}{ll}
\gamma y(1)_1^e&\mbox{ if }i=1\\
y(1)_i&\mbox{ if }2\le i\le n
\end{array}\right.
$$
where $\gamma$ is a unit in $S_1$.  Going back to (\ref{eq24}), we see that $\omega(y(1)_1)$ is a generator of $\Gamma_{\omega,1}$ and $\nu(x(3)_1)$ is a generator of $\Gamma_{\nu,1}$. 
We thus have the conclusions of the proposition. 

Now suppose that $e=1$. This case is much simpler.  In (\ref{eq22}) we then have that ${\rm Det}(C)=\pm 1$. Taking $B=C^{-1}=(b_{i,j})$, we can then make the change of variables in $S_0$ 
replacing the $y_i$ with the product of the unit
 $\gamma_1^{-b_{i,1}}\cdots \gamma_n^{-b_{i,n}}$ times $y_i$ for $1\le i\le n$ to get $\gamma_i=1$ for $1\le i\le n$ in (\ref{eq23}).
 
 Now define a birational transformation $R_0\rightarrow R_1$ along $\nu$ by $R_1=R_0[x(1)_1,\ldots,x(1)_n]_{\nu}$ where
$$
x_i=x(1)_1^{c_{i,1}}\cdots x(1)_n^{c_{i,n}}\mbox{ for }1\le i\le n.
$$ 
The ring $R_1$ is a regular local ring with regular parameters $x(1)_1,\ldots,x(1)_n$. We have that $R_1$ is dominated by $S$, and
$$
x(1)_i = y(1)_i\mbox{ for }1\le i\le n,
$$
giving the  conclusions of the proposition.
\end{proof}

\begin{Proposition}\label{PropSR} Suppose that $e(\omega|\nu)=\epsilon(\omega|\nu)$ and 
$R_0\rightarrow S_0$ has the form of the conclusions of Proposition \ref{PropGoodForm} for 
 good  parameters
$\{\overline x(0)_{i,j}\}$ in $R_0$ and good parameters $\{\overline y(0)_{i,j}\}$ in $S_0$.
Then there exist $z_1,\ldots,z_m\in V_\omega$ such that $S_0=R_0[z_1,\ldots,z_m]_\omega$.


Let $R_0\rightarrow R_1$ be a GMTS along $\nu$,
constructed from a good change of parameters  $\{x(0)_{ij}\}$ in $R_0$, and giving  good parameters $\{\overline x(1)_{ij}\}$ in $R_1$. 

We then have a good change of parameters 
$\{y(0)_{ij}\}$ in $S_0$ defined by
$$
y(0)_{ij}=\left\{\begin{array}{ll}
\overline y(0)_{ij}\mbox{ if }i=1 \mbox{ and } j=1\\
x(0)_{ij}\mbox{ otherwise. }
\end{array}\right.
$$
The good parameters $\{x(0)_{ij}\}$ and $\{y(0)_{ij}\}$ continue to have the form of the conclusions of Proposition \ref{PropGoodForm}.

There exists a GMTS  $S_0\rightarrow S_1$ along $\omega$,
constructed from the above good  parameters  $\{y(0)_{ij}\}$ in $S_0$, and giving  good parameters $\{\overline y(1)_{ij}\}$ in $S_1$, 
such that there is a good change of parameters $\{y'_{i,j}\}$ in  $S_1$ such that 
$\{x(1)_{i,j}\}$ and $\{y'_{i,j}\}$ are related by an expression of the form of the conclusions of Proposition \ref{PropGoodForm} and we have that 
 $S_1=R_1[z_1,\ldots,z_m]_{\omega}$.
\end{Proposition}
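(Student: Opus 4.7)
The plan is to establish the three claims of the proposition in order: existence of $z_1,\ldots,z_m$ with $S_0=R_0[z_1,\ldots,z_m]_\omega$; verification that the proposed $\{y(0)_{ij}\}$ is indeed a good change of parameters in $S_0$ preserving the monomial form of Proposition \ref{PropGoodForm}; and construction of a matching GMTS $S_0\to S_1$ with all required properties.

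For the first claim I invoke the fact recorded after Theorem \ref{Theorem3}: $S_0$ is the localization at $m_\omega$ of the integral closure $\overline{S}_0$ of a normal birational extension $R_0\to\overline{R}_0$ in $L$. Writing $\overline{R}_0=R_0[a_1,\ldots,a_l]_\nu$ with $a_i\in V_\nu$ and $\overline{S}_0=\overline{R}_0[b_1,\ldots,b_k]$ (module-finite over $\overline{R}_0$ in characteristic zero), the collected finite set $\{z_i\}\subset V_\omega$ gives $S_0=R_0[z_1,\ldots,z_m]_\omega$.  For the second claim I check the good-change constraint: $y(0)_{1,1}=\overline{y}(0)_{1,1}$ by definition, and for $(i,j)\neq(1,1)$ with $j\le s_i$ the identity $x(0)_{ij}=\overline{x}(0)_{ij}=\overline{y}(0)_{ij}$ (from the good change in $R_0$ together with the monomial form) yields $y(0)_{ij}=\overline{y}(0)_{ij}$.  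To see that $\{y(0)_{ij}\}$ is a regular system of parameters in $S_0$, compute the transition matrix from $\{\overline{y}(0)\}$ to $\{y(0)\}$ modulo $m_{S_0}^2$: its $(1,1)$-row is the standard basis vector, and its off-$(1,1)$ block agrees with the off-$(1,1)$ block of the invertible transition $\{\overline{x}(0)\}\to\{x(0)\}$ in $R_0$ (using that $\overline{x}(0)_{1,1}=\gamma\overline{y}(0)_{1,1}^e\in m_{S_0}^e\subseteq m_{S_0}^2$ when $e\ge 2$, and trivially when $e=1$).  The monomial form $x(0)_{1,1}=\gamma y(0)_{1,1}^e,\ x(0)_{ij}=y(0)_{ij}$ for $(i,j)\neq(1,1)$ then holds by construction.

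For the third claim I case-split on the Perron type of $R_0\to R_1$. When the type is $(1,m),(2,m)$, or $(3,m)$ with $m\ge 2$, or with $m=1$ and $e=1$, every parameter of $R_0$ used in the Perron step coincides with its $y$-side counterpart; the identical Perron transform applies on the $y$-side, the new elements are literally equal, and the form of Proposition \ref{PropGoodForm} on $R_1\to S_1$ is immediate.  When $m=1$ and $e>1$ we have $s_1=1$ by Proposition \ref{PropGS}, so type $(1,1)$ is trivial; for types $(2,1)$ and $(3,1)$ the $y$-side Perron transform uses $y(0)_{1,1}$ with exponents $e$ times the $x$-side exponents on $x(0)_{1,1}$, so that the new $y$-side element $N^{(y)}$ and the $x$-side element $N^{(x)}$ differ by a unit power of $\gamma$.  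In all cases $S_1=S_0[N^{(y)}]_\omega=S_0[N^{(x)}]_\omega=R_0[N^{(x)},z_1,\ldots,z_m]_\omega=R_1[z_1,\ldots,z_m]_\omega$, which settles the final equality. At positions unaffected by the Perron step the form is immediate; at an altered position $(k,l)$ with $l>s_k$, the unit discrepancy $\gamma^{d_1}$ is absorbed by the good change $y'_{k,l}=\gamma^{-d_1}\overline{y}(1)_{k,l}$, giving $y'_{k,l}=x(1)_{k,l}$.

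The main obstacle is the case where a Perron transform of type $(3,1)$ (or the level-1 portion of type $(2,1)$) targets a main parameter position $(k,l)$ with $l\le s_k$ and $e>1$: a good change of parameters fixes such positions, and the unit discrepancy $\gamma^{d_1}$ at $(k,l)$ cannot be absorbed directly.  I expect the resolution to proceed by refining the $y$-side step, replacing the single Perron transform with a short auxiliary construction that first performs a Perron step at a non-main position $(k,l')$ with $l'>s_k$ (where unit multiplication by $\gamma^{d_1}$ is a permissible good change), thereby transferring the unit into a slot that \emph{is} modifiable before the main step is performed.  Organizing this bookkeeping uniformly across Perron types, while preserving the identity $S_1=R_1[z_1,\ldots,z_m]_\omega$, will be the technical heart of the proof.
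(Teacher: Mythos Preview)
Your approach mirrors the paper's almost exactly: the paper also dispatches $S_0=R_0[z_1,\ldots,z_m]_\omega$ in one line (simply noting that $S_0$ is essentially of finite type over $k$), then case-splits on the Perron type of $R_0\to R_1$, running the literally identical Perron step on the $y$-side when $m\ge 2$ or $m=e=1$, and when $m=1$ with $e>1$ multiplying the relevant $y(0)_{1,1}$-exponent by $e$ so that the new element on the $S$-side agrees with the $R$-side element up to a power of the unit $\gamma$.

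The ``main obstacle'' you flag at the end is not one the paper treats as an obstacle, and you are over-engineering a fix. For a type $(3,1)$ step with $e>1$ landing on a position $(k,l)$ with $l\le s_k$, the paper simply writes ``after making a good change of parameters, replacing $\overline y(1)_{k,l}$ with $x(1)_{k,l}$'' and proceeds. You are correct that under the literal definition this is not a ``good change'' when $l\le s_k$, but the replacement is multiplication by the unit $\gamma^{-d_1}$: the resulting system $\{y'_{i,j}\}$ is still a system of good parameters in $S_1$ (the $\omega$-values are unchanged), and the form of Proposition~\ref{PropGoodForm} holds between $\{\overline x(1)_{i,j}\}$ and $\{y'_{i,j}\}$. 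That is all the inductive application in Theorem~\ref{Theorem2} ever uses --- one takes $\{y'_{i,j}\}$ as the new starting good parameters in $S_1$ and iterates. No auxiliary Perron step at a non-main position is needed. Your worry about type $(2,1)$ is also misplaced: when $e>1$ one has $s_1=1$, the $(1,1)$-parameter is fixed on both sides ($\overline y(1)_{1,1}=M_1=y(0)_{1,1}$, $\overline x(1)_{1,1}=N_1=x(0)_{1,1}$), and the only other altered position has second index $r>s_1=1$, so no unit discrepancy arises at a main position.
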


\begin{proof} The expression $S_0=R_0[z_1,\ldots,z_m]_\omega$ follows since $S_0$ is essentially of finite type over $k$.

 Suppose that the GMTS $R_0\rightarrow R_1$ is of type (1,m). Then $R_1=R[N_1,\ldots,N_{s_m}]_{\nu}$ where 
$$
x(0)_{mj}=N_1^{a_{j1}}\cdots N_{s_m}^{a_{js_m}}\mbox{ for  $1\le j\le s_m$}
$$
and the good regular parameters $\overline x(1)_{ij}$ in $R_1$ are defined by
$$
\overline x(1)_{ij}=
\left\{\begin{array}{ll}
N_j&\mbox{ if $i=m$ and $1\le j\le s_m$}\\
x(0)_{ij}&\mbox{ otherwise}
\end{array}\right.
$$

If $e>1$ then  $\Gamma_{\nu,1}$ has rational rank 1 ($s_1=1$), and we then cannot perform a GMTS of type (1,1).  In particular, we have $e=1$ if $m=1$. 

 We define the  Perron transform $S_0\rightarrow S_1[N_1,\ldots,N_{s_m}]_{\omega}$ 
of type (1,m), giving good regular parameters
$\overline y(1)_{ij}$ such that 
$$
\overline y(1)_{ij}=
\left\{\begin{array}{ll}
N_j=\overline x(1)_{ij}&\mbox{ if $i=m$ and $1\le j\le s_m$}\\
y(1)_{ij}&\mbox{ otherwise.}
\end{array}\right.
$$
We then have that the good parameters $\{\overline x(1)_{ij}\}$ and $\{\overline y(1)_{ij}\}$ are related by an expression of the form of Proposition \ref{PropGoodForm} and we have that 
 $S_1=R_1[z_1,\ldots,z_m]_{\omega}$. 
 
 Suppose that the GMTS $R_0\rightarrow R_1$ is of type (2,m). Then $R_1=R_0[N_1,\ldots N_{s_m},N_r]_{\nu}$ where $s_m<r\le t_m$,
 $$
x(0)_{mj}=\left\{\begin{array}{ll}
N_1^{a_{j1}}\cdots N_{s_m}^{a_{js_m}}N_r^{a_{j,s_m+1}}&\mbox{ if  $1\le j\le s_m$}\\
N_1^{a_{s_m+1,1}}\cdots N_{s_m}^{a_{s_m+1,s_m}}N_r^{a_{s_m+1,s_m+1}}&\mbox{ if  $ j= r$}
\end{array}\right.
$$
where $a_{ij}\in \NN$, $\mbox{Det}(a_{ij})=\pm 1$ and $\nu(N_1),\ldots,\nu(N_{s_m})>0$ and $\nu_m(N_r)=0, \nu(N_r) \ge 0$. 

Let $\mathfrak m = m_{\nu}\cap R_0[N_1,\ldots,N_{s_m},N_r]$. Choose  $y\in R_1$  such that
$y$ is the lift to $R_1$ of a generator of the maximal ideal of 
\begin{equation}\label{eq11}
\begin{array}{ll}
&R_1/(x(0)_{1,1},\ldots,x(0)_{m-1,t_{m-1}},N_1,\ldots,N_{s_m},x(0)_{m,s_m+1},\ldots ,x(0)_{m,r-1},x(0)_{m,r+1},\ldots )\\
\cong &(R_0/m_{R_0})[N_r]_{\mathfrak m(R_0/m_{R_0})[N_r]}.
\end{array}
\end{equation}
 
 Let $\lambda$ be the  smallest natural number  such that $y\in P_{R_1,\lambda}\setminus P_{R_1,\lambda+1}$.  Then the  regular parameters $\{\overline x(1)_{ij}\}$ in $R_1$ are defined by

$$
 \overline x(1)_{ij}=\left\{\begin{array}{ll}
 N_j&\mbox{ if $i=m$ and $1\le j\le s_m$}\\
 y&\mbox{ if  $i=m$ and $j=r$}\\
 x(0)_{ij}&\mbox{ otherwise}
 \end{array}\right.
 $$
 if $\lambda=m$, and

 $$
 \overline x(1)_{ij}=\left\{\begin{array}{ll}
 N_j&\mbox{ if $i=m$ and $1\le j\le s_m$}\\
 y&\mbox{ if $i=\lambda$ and $j=t_{\lambda}+1$}\\
 x(0)_{i,j-1}&\mbox{ if  $i=m$ and $j\ge r+1$}\\
 x(0)_{ij}&\mbox{ otherwise}
 \end{array}\right.
 $$
 if $\lambda\ne m$. 

First suppose that $m>1$ or  $m=e=1$.

We define the  Perron transform $S_0\rightarrow S_1=S_0[N_1,\ldots,N_{s_m},N_r]_{\omega}$ 
of type (2,m).
 Let $\mathfrak n = m_{\omega}\cap S_0[N_1,\ldots,N_{s_m},N_r]$. 
 We have 
\begin{equation}\label{eq12}
\begin{array}{ll}
&S_1/(y(0)_{1,1},\ldots,y(0)_{m-1,t_{m-1}},N_1,\ldots,N_{s_m},y(0)_{m,s_m+1},\ldots ,y(0)_{m,r-1},y(0)_{m,r+1},\ldots )\\
\cong &(S_0/m_{S_0})[N_r]_{\mathfrak n(S_0/m_{S_0})[N_r]}.
\end{array}
\end{equation}
 By  (\ref{eq11}) and (\ref{eq12}), the dominant homomorphism $R_1\rightarrow S_1$,  induces a dominant homomorphism
 $$
 (R_0/m_{R_0})[N_r]_{\mathfrak m(R_0/m_{R_0})[N_r]}\rightarrow(S_0/m_{S_0})[N_r]_{\mathfrak n(S_0/m_{S_0})[N_r]} .
 $$ 
 
 Suppose that $y$ is the lift of $\overline y\in  (R_0/m_{R_0})[N_r]_{\mathfrak m(R_0/m_{R_0})[N_r]}$. We can assume that $\overline y\in   (R_0/m_{R_0})[N_r]$ is irreducible. Then $\overline y$ is a separable polynomial in the polynomial ring $(R_0/m_{R_0})[N_r]$ since $R_0/m_{R_0}$ has characteristic zero. Thus $\overline y\in  \mathfrak n(S_0/m_{S_0})[N_r]\subset  (S_0/m_{S_0})[N_r]$ is separable and hence $\overline y$ is a generator of the maximal ideal of  
 $$
 (S_0/m_{S_0})[N_r]_{\mathfrak n(S_0/m_{S_0})[N_r]}.
 $$
 
We may thus define our good parameters $\overline y(1)_{ij}$ in $S_1$ by 
 
$$
 \overline y(1)_{ij}=\left\{\begin{array}{ll}
 N_j&\mbox{ if $i=m$ and $1\le j\le s_m$}\\
 y&\mbox{ if  $i=m$ and $j=r$}\\
 y(0)_{ij}&\mbox{ otherwise}
 \end{array}\right.
 $$
 if $\lambda=m$, and

$$
 \overline y(1)_{ij}=\left\{\begin{array}{ll}
 N_j&\mbox{ if $i=m$ and $1\le j\le s_m$}\\
 y&\mbox{ if $i=\lambda$ and $j=t_{\lambda}+1$}\\
 y(0)_{i,j-1}&\mbox{ if  $i=m$ and $j\ge r+1$}\\
 y(0)_{i,j}&\mbox{ otherwise}
 \end{array}\right.
 $$
 if $\lambda\ne m$. 

 We  have that the good parameters $\{\overline x(1)_{ij}\}$ in $R_1$ and $\{\overline y(1)_{ij}\}$ in $S_1$ are related by an expression of the form of Proposition \ref{PropGoodForm} and we have that 
 $S_1=R_1[z_1,\ldots,z_m]_{\omega}$. 
 
 Now suppose that $m=1$ and $e>1$. Then $x(0)_{1,1}=\gamma y(0)_{1,1}^e$ and $x(0)_{1,r}=y(0)_{1,1}$. We then have that $s_1=1$, $\nu(x(0)_{11}$ is a generator of $\Gamma_{\nu,1}\cong\ZZ$ and $\nu(y(0)_{1,r})$ is a generator of $\Gamma_{\omega,1}\cong\ZZ$. Thus there exists $a\in \ZZ_+$ such that $\nu(x(0)_{1r})=a\nu(x(0)_{11})$ and so the equations defining $R_0\rightarrow R_1$ are $x(0)_{11}=N_1$ and $x(0)_{1r}=N_1^aN_r$. We have that $\omega(y(0)_{1r})=ea\omega(y(0)_{11})$. Define a GMTS along $\omega$,  $S_0\rightarrow S_1$, by $S_1=S_0[M_1,M_r]_{\omega}$ where $y(0)_{1,1}=M_1, y(0)_{1r}=M_1^{ea}M_r$. We have that $M_r=\gamma^aN_r$ so $S_1=R_1[z_1,\ldots,z_m]_{\omega}$.  As in the case $m>1$ or $m=e=1$, we may define our good parameters $\{\overline y(1)_{ij}\}$ in $S_1$ so that the good parameters $\{\overline x(1)_{ij}\}$ in $R_1$ and the good parameters $\{\overline y(1)_{ij}\}$ in $S_1$ are related by an expression of the form of Proposition \ref{PropGoodForm}.

Suppose that the MTS $R_0\rightarrow R_1$ is of the type (3,m). 
Then $R_1=R[N]_{\nu}$ where 
$$
N=\frac{x(0)_{kl}}{x(0)_{m1}^{d_1}\cdots x(0)_{ms_1}^{d_{s_m}}}.
$$
for some  $d_1,\ldots,d_{s_m}\in\NN$,  $k>m$ and $1\le l\le t_k$. 
The good parameters $\{\overline x(1)_{1,1}\}$ in $R_1$ are defined by
$$
\overline x(1)_{ij}=\left\{\begin{array}{ll}
N&\mbox{ if $i=k$ and $j=l$}\\
x(0)_{ij}&\mbox{ otherwise.}
\end{array}\right.
$$
If $m>1$ or $e=m=1$, then
$$
N=\frac{y(0)_{k,l}}{y(0)_{m1}^{d_1}\cdots y(0)_{m,s_m}^{d_{s_m}}}
$$
and if $m=1$ and $e>1$, then
$$
N=\gamma^{-d_1}\frac{y(0)_{k,l}}{y(0)_{11}^{ed_1}}.
$$
We may thus define a  GMTS along $\omega$,  $S_0\rightarrow S_1$, of type (3,m) by $S_1=S_0[N]_{\omega}$.  

The good parameters $\{\overline y(1)_{ij}\}$ in $S_1$ defined by the GMTS are such that  after making a good change of parameters, replacing $\overline y(1)_{k,l}$ with $x(1)_{k,l}$, the good parameters $\{\overline x(1)_{ij}\}$ in $R_1$ and the good parameters $\{\overline y(1)_{ij}\}$ in $S_1$ are related by an expression of the form of Proposition \ref{PropGoodForm}. We have that $S_1=R_1[z_1,\ldots,z_m]_{\omega}$.
\end{proof}

We now prove Theorem \ref{TheoremA} from Section \ref{SecInt}, which we restate in Theorem \ref{Theorem2}.

\begin{Theorem}\label{Theorem2} Let $K$ be an algebraic function field over a field $k$ of characteristic zero and let $\nu$ be a valuation of $K/k$ ($\nu$ is trivial on $k$). Assume that $L$ is a finite extension of $K$ and $\omega$ is an extension of $\nu$ to $L$. Then $V_{\omega}$ is essentially finitely generated over $V_{\nu}$
If and only if $e(\omega|\nu)=\epsilon(\omega|\nu)$.
\end{Theorem}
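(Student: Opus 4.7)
The forward implication---$V_\omega$ essentially finitely generated over $V_\nu$ implies $\epsilon(\omega|\nu)=e(\omega|\nu)$---is Knaf's theorem, reproduced in \cite[Theorem 4.1]{CN}; the defect condition $d(\omega|\nu)=1$ that accompanies it is automatic in equicharacteristic zero. I would concentrate on the converse. Assuming $\epsilon=e$, apply Proposition \ref{PropGoodForm} together with Remark \ref{RemarkGoodForm} to obtain algebraic regular local rings $R_0$ of $K$ and $S_0$ of $L$, with $R_0$ dominating the ring $R'$ of Proposition \ref{Prop1} and $S_0$ dominating $R_0$, together with very good parameters $\{x_{i,j}\}$ in $R_0$ and $\{y_{i,j}\}$ in $S_0$ satisfying $x_{1,1}=\gamma y_{1,1}^e$ and $x_{i,j}=y_{i,j}$ for $(i,j)\ne(1,1)$. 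Since $S_0$ is essentially of finite type over $k$, Proposition \ref{PropSR} supplies $z_1,\ldots,z_m \in V_\omega$ with $S_0 = R_0[z_1,\ldots,z_m]_\omega$. The entire plan is to show that
\[
V_\omega \;=\; V_\nu[z_1,\ldots,z_m]_\omega,
\]
which displays $V_\omega$ as a localization of the finitely generated $V_\nu$-algebra $V_\nu[z_1,\ldots,z_m]$.

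The inclusion $V_\nu[z_1,\ldots,z_m]_\omega\subseteq V_\omega$ is immediate since $V_\nu\subseteq V_\omega$ and each $z_i\in V_\omega$. For the reverse inclusion I would fix $f\in V_\omega$. Since $L={\rm QF}(S_0)$, write $f=a/b$ with $a,b\in S_0$. By Lemma \ref{Lemma2} there exists $g\in\overline{R_0}$ with $b\mid g$ in $S_0$; writing $g=bu$ with $u\in S_0$ one obtains $f=au/g$, with $au\in S_0$ and $g\in\overline{R_0}\subseteq V_\nu$. Next I would construct a sequence of GMTSs $R_0\to R_1\to\cdots\to R_n$ along $\nu$---after enlarging $R_0$ if necessary so that $g\in R_0$ while preserving the form of Proposition \ref{PropGoodForm}---which monomializes $g$ via Theorem \ref{Theorem1} and simultaneously monomializes the coefficients appearing when $au$ is expressed as a fraction over $R_0[z_1,\ldots,z_m]$. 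Iterating Proposition \ref{PropSR} lifts this to a parallel sequence $S_0\to S_1\to\cdots\to S_n$ along $\omega$ with $S_n = R_n[z_1,\ldots,z_m]_\omega$ and with $(R_n,S_n)$ still of the shape of Proposition \ref{PropGoodForm}. In particular, the monomial expression of $g$ in the $\{x(n)_{i,j}\}$ translates, via $x(n)_{1,1}=\gamma' y(n)_{1,1}^e$ and $x(n)_{i,j}=y(n)_{i,j}$ otherwise, into a monomial expression of $g$ in the $\{y(n)_{i,j}\}$ (up to a unit of $S_n$).

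The final step would be to exhibit a divisibility $g\mid au$ in $S_n$: once that is in hand, $f=au/g\in S_n=R_n[z_1,\ldots,z_m]_\omega\subseteq V_\nu[z_1,\ldots,z_m]_\omega$, and the theorem follows. The driving inequality is $\omega(au)\geq\omega(g)$, forced by $f\in V_\omega$, and the divisibility would be extracted by iteratively applying Proposition \ref{PropZa} to compare the $\{x(n)_{i,j}\}$-monomial of $g$ with the coefficient monomials of $au$, interleaving further GMTSs of types $(1,m)$ and $(3,m)$ so that the $g$-monomial divides each coefficient monomial arising in $au$. This cancellation is the main obstacle: an inequality of $\omega$-values automatically gives divisibility in the valuation ring $V_\omega$ but not in the regular local ring $S_n$, and promoting it to honest divisibility in $S_n$ requires the coordinated monomialization on both sides. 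The crucial structural ingredient is the identity $x_{1,1}=\gamma y_{1,1}^e$---itself a consequence of $\epsilon=e$ via Proposition \ref{PropGS}---which makes the $x$- and $y$-side monomial structures commensurate and lets the $R$-side monomialization steps be transported faithfully to the $S$-side while preserving the generating set $z_1,\ldots,z_m$.
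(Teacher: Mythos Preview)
Your overall architecture matches the paper's, and the forward direction is handled correctly. The gap is in how you treat the numerator. You apply Lemma \ref{Lemma2} only to the denominator $b$, obtaining $g$ with $b\mid g$, and rewrite $f = au/g$. After monomializing $g$ you then need $g\mid au$ in $S_n$, but $au$ remains an arbitrary element of $S_n$: it has no finite ``coefficient monomial'' expansion, and your plan to ``monomialize the coefficients appearing when $au$ is expressed as a fraction over $R_0[z_1,\ldots,z_m]$'' is not a well-defined procedure. Proposition \ref{PropZa} compares two \emph{single} monomials; it cannot be iterated over the (in general infinite) terms of an expansion in $\widehat{S_n}$, and the bare inequality $\omega(au)\ge\omega(g)$ does not force divisibility in the regular local ring $S_n$. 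So the final step, as written, does not go through.

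The paper closes this gap with one change: apply Lemma \ref{Lemma2} to the \emph{product} $ab$ (in the paper's notation, to $gh$), producing $c\in R_0$ with $ab\mid c$ in $S_0$. After monomializing $c$ via Theorem \ref{Theorem1} and lifting to $S_m$ by Proposition \ref{PropSR}, the element $c$ is a monomial in the regular parameters of $S_m$ times a unit; since $S_m$ is regular (hence a UFD) and $ab\mid c$ there, \emph{both} $a$ and $b$ are individually monomials times units in $S_m$. Now each side of the desired divisibility is a single monomial, and one application of Proposition \ref{PropZa}---to the $e$-th-power monomials $W_1,W_2$ transported to the $R$-side---suffices. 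The trick of forcing numerator and denominator to be monomial simultaneously, by working with their product, is the missing idea. (Incidentally, ``enlarging $R_0$ so that $g\in R_0$'' is unnecessary: $R_0$ is regular, hence normal, so the element of $K$ integral over $R_0$ produced by Lemma \ref{Lemma2} already lies in $R_0$.)
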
 

\begin{proof} If $V_{\omega}$ is essentially finitely generated over $V_{\nu}$ then $e(\omega|\nu)=\epsilon(\omega|\nu)$ by Theorem 4.1 \cite{CN}.

Suppose that $e(\omega|\nu)=\epsilon(\omega|\nu)$. We will show that $V_{\omega}$ is essentially finitely generated over $V_{\nu}$. Let $R_0\rightarrow S_0$ be such that $R_0$ satisfies the conclusions of Lemma \ref{Lemma1} and the conclusions of Proposition \ref{PropGoodForm} with respect  to very good parameters in $R_0$ and very good parameters in $S_0$.
Write $S_0=R_0[z_1,\ldots,z_m]_{\omega}$.  We will show that $V_{\omega}=V_{\nu}[z_1,\ldots,z_n]_{\omega}$.

Suppose that $f\in V_{\omega}$.  Write $f=\frac{g}{h}$ with $g,h\in S_0$. By Lemma \ref{Lemma2}, and since with the conclusions of Proposition \ref{PropGoodForm} $S_0$ is a localization of the integral closure of $R_0$ in $L$, there exists $c\in R_0$ such that $gh$ divides $c$ in $S_0$. By Theorem \ref{Theorem1}, there exists a sequence of GMTSs $R_0\rightarrow R_1\rightarrow \cdots\rightarrow R_m$ such that
$$  
c=x(m)_{1,1}^{d_{1,1}}\cdots x(m)_{1,s_1}^{d_{1,s_1}}x(m)_{2,1}^{d_{2,1}}\cdots x(m)_{2,s_2}^{d_{2,s_2}}x(m)_{3,1}^{d_{3,1}}\cdots x(m)_{u,s_u}^{d_{u,s_u}}\gamma
$$
where $\gamma$ is a unit in $R_m$ and $d_{ij}\in\NN$ for all $i,j$.

By Proposition \ref{PropSR}, there exists a sequence of GMTSs $S_0\rightarrow S_1\rightarrow\cdots\rightarrow S_m$ such that $S_m=R_m[z_1,\ldots,z_n]_{\omega}$ and there are good parameters $\{y_{ij}(m)\}$ in $S_m$ such that 
$x(m)_{ij}= y(m)_{ij}$ if $1< i\le u$ and $1\le j\le s_i$ or if $i=1$ and $j>1$. We further have that
$$
 x(m)_{1,1}=\left\{\begin{array}{ll}
\tau y(m)_{1,1}^e\mbox{ where $\tau\in S_m$ is a unit }&\mbox{ if $e>1$}\\
y(m)_{1,1}&\mbox{ if $e=1$.}\end{array}\right.
$$
Thus in $S_m$, $c$ has an expression
$$
c=y(m)_{1,1}^{ed_{1,1}}y(m)_{1,2}^{d_{1,2}}\cdots y(m)_{1,s_1}^{d_{1,s_1}}y(m)_{2,1}^{d_{2,1}}\cdots y(m)_{2,s_2}^{d_{2,s_2}}x(m)_{3,1}^{d_{3,1}}\cdots y(m)_{u,s_u}^{d_{u,s_u}}\gamma'
$$
where $\gamma'$ is a unit in $S_m$ ($\gamma'=\gamma$ if $e=1$).

Since $gh$ divides $c$ in $S_m$, we have expressions 
$$
g=y(m)_{1,1}^{a_{1,1}}\cdots y(m)_{1,s_1}^{a_{1,s_1}}y(m)_{2,1}^{a_{2,1}}\cdots y(m)_{2,s_2}^{a_{2,s_2}}x(m)_{3,1}^{a_{3,1}}\cdots y(m)_{u,s_u}^{a_{u,s_u}}\alpha
$$
where $\alpha$ is a unit in $S_m$ and $a_{ij}\in\NN$ for all $i,j$, and 

$$
h=y(m)_{1,1}^{b_{1,1}}\cdots y(m)_{1,s_1}^{b_{1,s_1}}y(m)_{2,1}^{b_{2,1}}\cdots y(m)_{2,s_2}^{b_{2,s_2}}x(m)_{3,1}^{b_{3,1}}\cdots y(m)_{u,s_u}^{b_{u,s_u}}\beta
$$
where $\beta$ is a unit in $S_m$ and $b_{ij}\in\NN$ for all $i,j$. 
Let 
$$
W_1=x(m)_{1,1}^{a_{1,1}}x(m)_{1,2}^{ea_{1,2}}\cdots x(m)_{1,s_1}^{ea_{1,s_1}}x(m)_{2,1}^{ea_{2,1}}\cdots x(m)_{2,s_2}^{ea_{2,s_2}}x(m)_{3,1}^{ea_{3,1}}\cdots x(m)_{u,s_u}^{ea_{u,s_u}}
$$
and
$$
W_2=x(m)_{1,1}^{b_{1,1}}x(m)_{1,2}^{eb_{1,2}}\cdots x(m)_{1,s_1}^{eb_{1,s_1}}x(m)_{2,1}^{eb_{2,1}}\cdots x(m)_{2,s_2}^{eb_{2,s_2}}x(m)_{3,1}^{eb_{3,1}}\cdots x(m)_{u,s_u}^{eb_{u,s_u}}.
$$
We have that $\nu(W_1)=e\omega(g)>e\omega(h)=\nu(W_2)$.

By Proposition \ref{PropZa}, there exists a sequence of GMTSs 
$$
R_m\rightarrow R_{m+1}\rightarrow \cdots \rightarrow R_v
$$
 of types (1,m) and (3,m) such that $W_2$ divides $W_1$ in $R_v$.

By Proposition \ref{PropSR}, there exists a sequence of GMTSs $S_m\rightarrow S_{m+1}\rightarrow\cdots\rightarrow S_v$ such that $S_v=R_v[z_1,\ldots,z_n]_{\omega}$
and there exists a good change of parameters $\{y(v)_{ij}\}$ in $S_v$ such that the good parameters $\{x(v)_{ij}\}$ of $R_v$ have the good form of the conclusions of Proposition \ref{PropSR}. 
Thus $W_2$ divides $W_1$ in $S_v$. Now $W_1=g^e\alpha^{-e}\tau^{a_{1,1}}$ and $W_2=h^e\beta^{-e}\tau^{b_{1,1}}$. Thus $h^e$ divides $g^e$ in $S_v$. Now $g$ and $h$ are monomials in the good parameters of $S_v$ times  units. Thus $h$ divides $g$ in $S_v$ and so $f=\frac{g}{h}\in S_v$. Thus $f\in V_{\nu}[z_1,\ldots,z_m]_{\omega}$. Since this is true for all $f\in V_{\omega}$, we have that $V_{\omega}=V_{\nu}[z_1,\ldots,z_m]_{\omega}$ is essentially finitely generated over $V_{\nu}$.

\end{proof}

\end{document}